\documentclass[12pt]{amsart}
\usepackage{hyperref}
\usepackage{amsfonts}
\usepackage{bbm}
\usepackage{esint}
\usepackage{mathrsfs}
\usepackage{amssymb,amsmath,amsfonts,amsthm,color}
\usepackage{setspace}
\numberwithin{equation}{section}

\DeclareMathOperator{\supp}{supp}

\allowdisplaybreaks

\theoremstyle{plain}
\newtheorem{theorem}{Theorem}[section]

\newtheorem{lemma}[theorem]{Lemma}

\theoremstyle{definition}
\newtheorem{definition}[theorem]{Definition}
\newtheorem{remark}[theorem]{Remark}
\newtheorem{proposition}[theorem]{Proposition}

\allowdisplaybreaks

\newcommand{\e}{\varepsilon}

\newcommand{\R}{\mathbb{R}}

\def\R{\mathbb R}

\def\12{\frac{1}{2}}

\def\b1 {\dot{B}_1^{-\alpha,1}}

\begin{document}

\setcounter{tocdepth}{3} \allowdisplaybreaks

\title[Variable Dunkl--Hardy spaces ]
{Dual spaces and T1 theorem of variable Dunkl--Hardy spaces
}

\author[J Tan]{Jian Tan}

\subjclass[2020]{Primary 42B35; Secondary 43A85, 42B25, 42B30}

\keywords{Dual space, Dunkl--Calder\'on--Zygmund operator, Littlewood--Paley theory, variable Dunkl--Hardy space, T1 Theorem}

\begin{abstract}
In this paper, we identify the dual spaces of variable Dunkl--Hardy spaces using a discrete Dunkl--Calder\'on reproducing formula and the Frazier and Jawerth's method adapted to the variable exponent setting. Furthermore, we prove a $T1$ theorem for Dunkl--Calder\'on--Zygmund operators acting on variable Dunkl--Hardy spaces and on their dual spaces via using almost orthogonal estimate, the duality result and the density argument.
    \end{abstract}
\maketitle

\section{Introduction}
The study of the classical real-variable Hardy spaces theory was initiated by Stein and Weiss \cite{SW} and systematically developed by Fefferman and Stein \cite{FS}.
Since then, the real Hardy spaces have played a fundamental role in harmonic analysis, partial differential equations and related areas. 

On the other hand, due to many applications to 
elasticity, fluid dynamics, calculus of variations, and differential equations with 
non-standard growth condition,
the variable exponent function spaces,
such as the variable Lebesgue spaces and the variable
Hady spaces, were studied by a substantial number
of researchers. For instance, see \cite{CF,DHHR2011} for variable Lebesgue spaces
and \cite{CW,Ho,NS,Tan2023,YZN} for variable Hardy spaces.
However, most of these developments are considered in Euclidean spaces or, more generally, in the spaces of homogeneous type. The study of variable Hardy spaces in the Dunkl settings is still quite limited. 

Dunkl Harmonic analysis, introduced by C. F. Dunkl in \cite{D2} in connection with finite reflection groups, provides a natural generalization of classical harmonic analysis by replacing partial derivatives with Dunkl operators. 
Recently, the theory of Dunkl Hardy spaces have been systematically studied in \cite{ABDH, ADH, DH2, JL,THHLL2}, including atomic decomposition, maximal characterizations, duality theory and boundedness of Dunkl singular integrals. Nevertheless, these results are mainly restricted to the constant exponent setting, and a complete theory of variable Hardy spaces in the Dunkl setting has not yet been established.

{\it In this paper, we aim to fill this gap by developing a duality theory and a $T1$ theorem for variable Hardy spaces in the Dunkl setting.}
More precisely, we show that the dual space of the variable Dunkl--Hardy space can be characterized by a suitable variable Dunkl--Carleson measure space. Moreover, we establish a $T1$ theorem for Calder\'on--Zygmund operators on variable Dunkl--Hardy spaces and variable Dunkl--Carleson measure space. The paper is organized as follows. Section 2 collects necessary preliminaries on Dunkl Harmonic analysis and variable function spaces. In Section 3, we establish the duality between variable Dunkl--Hardy spaces and variable Dunkl--Carleson measure spaces. Section 4 is devoted to the general $T1$ theorem for variable Dunkl--Hardy spaces and variable Dunkl--Carleson measure space.
The main tools in this paper include the Calder\'on reproducing formula in the Dunkl setting, discrete Dunkl--Littlewood--Paley theory, almost orthogonality estimates, and techniques from variable exponent analysis. The novelty of this paper lies in the intersection of Dunkl harmonic analysis and the theory of variable exponent function space, combining tools from both fields to address problems.

Throughout this paper, $C$ or $c$ denotes a positive constant that is independent of the main parameters involved but may vary at each occurrence. To denote the dependence of the constants on some parameter $r$, we will write $C{(r)}$. We denote $f\leq Cg$ by $f\lesssim g$. If $f\lesssim g\lesssim f$, we write $f\sim g$.

\section{Preliminaries and notation}

In this section, we give an account on results from the Dunkl theory, the theory of variable function spaces, the Dunkl--Calder\'on reproducing formula and   
the variable Dunkl--Hardy space which will be relevant for the sequel.

\subsection{Dunkl theory}
Let $R$ be a normalized root system in $\mathbb R^n$ so that $\langle\alpha, \alpha\rangle=2$ for $\alpha\in R$ and $R^+$ be a fixed positive subsystem, where 
$\langle \cdot, \cdot\rangle$ 
stands for the usual scalar product on $\mathbb R^n$. 
We denote by $G$ the finite reflection group generated by the reflections $\sigma_\alpha$ for all $\alpha\in R$, 
where $\sigma_\alpha(x)=x-\langle x,\alpha\rangle\alpha$
for $x\in \mathbb R^n$.
A multiplicity function $\kappa(\alpha)\ge 0$ is invariant under the natural action of $G$ defined on $R$, 
which will be fixed throughout this paper. For $\xi\in R^n$,
the Dunkl operators $T_\xi$ are defined by
$$T_\xi f(x)=\partial_\xi f(x) + \sum\limits_{\alpha\in R}\frac{\kappa(\alpha)}{2}\langle\alpha, \xi\rangle\frac{f(x)-f(\sigma_\alpha(x))}{\langle \alpha, x\rangle}.$$
We denote that $T_j=T_{e_j}$,
where $e_1, \cdots, e_n$ are the standard unit vectors of $\mathbb R^n.$
The Dunkl measure $dw(x)$ is defined by
\begin{eqnarray*}
dw(x)=\prod_{\alpha\in R}|\langle\alpha,x\rangle|^{\kappa(\alpha)}dx=\prod_{\alpha\in R^+}|\langle\alpha,x\rangle|^{2\kappa(\alpha)}dx
\end{eqnarray*}
be the associated measure in $\mathbb{R}^{n}$, where $dx$ stands for the Lebesgue
measure in $\mathbb{R}^{n}$. 
\noindent We denote by $L^p(\R^n,dw)$, $p\in[1,\infty]$, the space of measurable functions on
$\mathbb R^n$ such that
$$
\|f\|_{L^p(\R^n,dw)}=\left(\int_{\mathbb R^N}|f(x)|^pdw(x)\right)^{1/p}<\infty,\;1\le p<\infty,
$$
$$
\|f\|_{L^\infty(\R^n,dw)}=\mbox{ess}\sup_{x\in \mathbb R^n}|f(x)|<\infty.
$$
For fixed $y\in R^n$, the Dunkl kernel $x\rightarrow E(x,y)$ is a unique analytic solution to the system
$$T_\xi f=\big<\xi,y\big>f,\quad f(0)=1$$
for all $\xi\in\mathbb R^n$.

Dunkl transform, which enjoys properties
similar to the classical Fourier transform, is
defined by
$$ \widehat{f}(x)=c_h\int_{\mathbb{R}^{n}} E(x, -iy)f(y)h^2_{\kappa}(y)dy,$$
where the usual character $e^{-i\langle x,y\rangle}$ is replaced by
$E(x, -iy)=V_{\kappa}(e^{-i\langle \cdot,y\rangle})(x)$ for some
positive linear operator $V_{\kappa}$ and the weight functions
$h_{\kappa}$ are invariant under a finite reflection group $G$ on
$\mathbb{R}^{n}$. Particularly, the Dunkl transform also
satisfies the Plancherel identity, namely, $\|\widehat{f}\|_2=\|f\|_2$
and  if the parameter $\kappa=0,$ then $h_{\kappa}(x)=1$ and
$V_{\kappa}$ is the identity operator, thus the Dunkl transform reduces to the classical
Fourier transform automatically.  For more details, see \cite{D2}.

Dunkl translation is defined by
$$ {\widehat{\tau_y f}}(x)=E(y, -ix) \hat{f}(x)$$
for all $x\in \mathbb{R}^{n}.$ When the function $f$ is in the Schwartz class, the above equality holds pointwise. As an operator on $L^2(\mathbb R^n,h^2_{\kappa}),$ $\tau_y$ is bounded. However, it is not at all clear whether the translation operator can be defined for $L^p$ functions with
$p\not=2.$ For $f, g\in L^2(\mathbb{R}^{n},h^2_{\kappa}),$ their convolution
can be defined in terms of the translation operator by
$$f{\ast}_{\kappa} g(x)=\int_{\mathbb{R}^{n}}f(y)\tau_{x}\widetilde g(y)h^2_{\kappa}(y)dy,$$
where $\widetilde g(y)=g(-y).$ 
See \cite{TX1} for more information.

 There are two important metrics in the Dunkl setting which will be used in the whole paper. The Euclidean metric is defined by 
$$\|x-y\|:=\left\{\sum\limits_{j=1}^n|x_j-y_j|^2\right\}^\frac{1}{2}$$ and the so-called Dunkl metric is given by
$$d(x,y)=\min\limits_{\sigma\in G}\|x-\sigma(y)\|,$$ the distance  between two G-orbits $\mathcal{O}(x)$ and $\mathcal{O}(y).$ Obviously, 
$$d(x,y)\leqslant \|x-y\|, d(x,y)=d(y,x),\quad \mbox{and}\quad d(x,y)\leqslant
d(x,z)+d(z,y)$$ for all $x,y,z\in \R^n.$ Moreover,
$\omega(B(x,r))\sim \omega(B(y,r))$ when $d(x,y)\sim r,$ and
$$\omega(B(x,r))\leqslant \omega(B_d(x,r))\leqslant
|G|\omega(B(x,r)),$$ 
where $B(x,r):=\{y\in
\mathbb{R}^{n}:\|x-y\|<r\}$ and 
$$B_d(x,r):=\{y\in
\mathbb{R}^{n}:d(x,y)<r\}.$$
Moreover, 
for all $x\in\mathbb{R}^n$ and $0<r_1<r_2$, there exists a positive constant $C$ such that
\[
C^{-1}\left(\frac{r_2}{r_1}\right)^n
\le
\frac{\omega(B(x,r_2))}{\omega(B(x,r_1))}
\le
C\left(\frac{r_2}{r_1}\right)^N.
\]

This implies that
$d\omega(x)$ satisfies the doubling and reverse
doubling properties, that is, there is a constant $C > 0$ such that
for all $ x\in \mathbb{R}^{n}, r>0,\lambda \geqslant 1$ and ${N}=n+\sum\limits_{\alpha\in R}\kappa(\alpha),$
$$C^{-1}\lambda^{n}\omega(B(x, r)) \leqslant \omega(B(x, \lambda
r))\leqslant C\lambda^{ N}\omega(B(x, r)).$$ 
It turns out that the Dunkl setting, $(\Bbb R^n,
  \|\cdot\|, dw),$ is space of homogeneous type in the sense of
  Coifman and Wiess with the measure $dw$ satisfing the
  doubling and the reverse doubling properties. 

Denote $BMO(\mathbb{R}^n, d\omega)$ by the standard BMO space on $(\mathbb{R}^n,\|\cdot\|,d\omega)$. That is,
\[
BMO(\mathbb{R}^n,d\omega)
=
\left\{
b\in L_{\mathrm{loc}}^1(\mathbb{R}^n,d\omega):
\|b\|_*<\infty
\right\},
\]
where
\[
\|b\|_*
=
\sup_{B\subset\mathbb{R}^n}
\frac{1}{\omega(B)}
\int_B
|b(x)-b_B|\,d\omega(x)
<\infty
\]
with the supremum taken over all Euclidean balls
\[
B=B(y,r)
=
\left\{
z\in\mathbb{R}^n:
\|z-y\|<r
\right\},
\]
and
\[
b_B
=
\frac{1}{\omega(B)}
\int_B
b(x)\,d\omega(x).
\]

\subsection{Function spaces with variable exponents}
In $\mathbb R^n$,
for any variable exponent function $p(x)$ fulfilling
$$1\le p^-:=\mathrm{ess}\inf \{p(x):x\in\R^n\}\le p(x)\le p^+:=\mathrm{ess}\sup \{p(x):x\in\R^n\}<\infty,$$
the variable Lebesgue space $L^{p(\cdot)}(\mathbb R^n,dx)$ associated with Lebesgue measure $dx$ is defined to be the set of all measurable functions $f(x)$ defined on $\mathbb R^n$ such that
$$\int_{\mathbb R^n} |f(x)|^{p(x)}d x<\infty,$$
equipped with the Luxemburg--Nakano norm
$$\|f\|_{L^{p(\cdot)}(\mathbb R^n, dx)}
=\inf\left\{\lambda>0: \int_{\mathbb R^n} \left|\frac{f(x)}{\lambda}\right|^{p(x)}d x\le1 \right\}.$$
Obviously, when $p(x)=p$, the Lebesgue space with variable exponents $L^{p(\cdot)}(\mathbb R^n, dx)$ is reduced to the classical one $L^p(\mathbb R^n, dx)$.
As a special case of the theory of Nakano and Luxemberg,
we see that $L^{p(\cdot)}(\mathbb R^n, dx)$
is a quasi-normed space. Especially, when $p^-\geq1$, $L^{p(\cdot)}(\R^n, dx)$ is a Banach space.

To define the variable function spaces in the Dunkl setting, let us first give the definitions of variable exponent functions in the Dunkl setting.
In what follows, a measurable function $p(\cdot): \mathbb R\to (0,\infty)$ is said to be a variable exponent function.
Denote by $\mathcal{P}^0=\mathcal{P}^0(\mathbb R^n)$ the set of all variable exponents $p(\cdot)$ on $\mathbb R^n$ with $0< p^-\le p(x)\le  p^+<\infty$. 
Denote by $\mathcal{P}=\mathcal{P}(\R^n)$ the set of all variable exponents $p(\cdot)$ on $\R^n$ with $1< p^-\le p(x)\le  p^+<\infty$.
For any measurable functions $f(x)$ defined on $\R^n$ with respect to Dunkl measure $dw$, we also define the modular 
$$
\rho_{p(\cdot)}(f)=\int_{\R^n}|f(x)|^{p(x)}dw(x).
$$

\begin{definition}\cite{TT}
For any $p(\cdot)\in \mathcal{P}$, the Dunkl--Lebesgue space with variable exponents $L^{p(\cdot)}(\R^n,dw)$
is defined to be the set of all measurable functions $f(x)$ defined on $\R^n$ with respect to Dunkl measure $dw$ such that the Luxemburg-Nakano norms
$$\|f\|_{L^{p(\cdot)}(\R^n, dw)}=\inf\left\{\lambda>0:\rho_{p(\cdot)}\bigg(\frac{1}{\lambda}f(x)\bigg)\le1 \right\}<\infty.$$
\end{definition}

We assume throughout that the variable exponent $p(\cdot)$ is $G$-invariant, i.e., $p(\sigma x)=p(x)$ for all $\sigma\in G$ and $x\in\mathbb{R}^N$.
We also recall some remarkable results on Lebesgue space with variable exponents in \cite{HHP}, which can be applied to the Dunkl setting and will be used in our proofs of the main results.

By $M_Df$ we denote the Dunkl--Hardy--Littlewood maximal function of $f$, i.e.
$$
M_Df(x):=\sup_{t>0}\frac{1}{w(B(x,t))}\int_{B(x,t)}|f(y)|dw(y).
$$
We denote that $\widetilde M_Df(x):=\sum_{\sigma\in G}M_Df(\sigma(x))$.
By $\mathcal B$ we denote the set of bounded exponents $p(x)\in \mathcal P$ such that the function $M_D$ is bounded on $L^{p(\cdot)}(\R^n, dw)$.
An important subset of $\mathcal{B}$ is the well-known $LH$
condition.
A variable exponent function $p(\cdot) \in \mathcal{P}$ is said to be locally log-H\"{o}lder continuous in $\mathbb{R}^n$,
if there exists a constant $c_{\log }(p)>0$ such that for any $x, y \in {\R^n}$,
$$
|p(x)-p(y)| \leq \frac{c_{\log }(p)}{\log (\e+1/{\|x-y\|})},
$$
and is said to satisfy the log-H\"{o}lder decay condition if
there exist a $p_\infty\in\R$ and a constant $c_\infty(p)>0$ such that for any $x\in\R^n$,
$$
|p(x)-p_\infty| \leq \frac{c_\infty(p)}{\log (e+\|x\|)}.
$$
A variable exponent function $p(\cdot) \in \mathcal{P}$ is said to satisfy the globally log-H\"{o}lder continuous condition,
 denoted by $p(\cdot)\in LH$,
 if $p(\cdot)$ satisfies both the locally log-H\"{o}lder continuous condition and the log-H\"{o}lder decay condition.
We remark that, for the unbounded (quasi)metric measure space case, the above $LH$ definitions were first proved in \cite[Lemma 2.1]{AHH2015}.
Thus, as a consequence of \cite[Corollary 1.8]{AHH2015}, if the variable exponent function $p(\cdot)\in LH\cap \mathcal P$,
then the Dunkl--Hardy--Littlewood maximal function $M_Df$ of $f\in L^{p(\cdot)}(\R^n, dw)$
is bounded on $L^{p(\cdot)}(\R^n)$.
By using \cite[Theorem 1.1]{K}, if $p(\cdot)\in\mathcal B$, then the Dunkl--Hardy--Littlewood maximal function $M_Df$
is also bounded on $L^{p'(\cdot)}(\R^n)$, where $\frac{1}{p(x)}+\frac{1}{p'(x)}=1$ for any $x\in\mathbb R^n$, i. e. $p'(\cdot)\in \mathcal B$. 
The known Fefferman--Stein vector-valued inequality on spaces of homogeneous type $X=(\mathbb R^n, dw)$ in \cite[Theorem 2.7]{ZSY16} is also needed in our proofs.
\begin{lemma}\label{FS}
Let $p(\cdot)\in \mathcal P\cap LH$ and $u\in(1,\infty)$. Then for any measurable functions sequence $\{f_i\}_{i=1}^\infty\subset L^{p(\cdot)}(\mathbb R^n, dw)$,
\begin{equation*}
\left\|\left\{\sum_{i=1}^\infty[M_D(f_i)]^u\right\}^{\frac{1}{u}}\right\|_{L^{p(\cdot)}(\mathbb R^n, dw)}\le
C\left\|\left(\sum_{i=1}^\infty|f_i|^u\right)^{\frac{1}{u}}\right\|_{L^{p(\cdot)}(\mathbb R^n, dw)}.
\end{equation*}
\end{lemma}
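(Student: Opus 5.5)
The plan is to deduce Lemma~\ref{FS} from the general Fefferman--Stein vector-valued inequality on spaces of homogeneous type, \cite[Theorem 2.7]{ZSY16}, rather than to prove it from scratch. First I check that $X=(\mathbb R^n,\|\cdot\|,dw)$ satisfies that theorem's hypotheses. The preliminaries give that $\|\cdot\|$ is a metric and that $w$ is doubling, since $\omega(B(x,\lambda r))\le C\lambda^N\omega(B(x,r))$. Moreover, $M_D$ is exactly the Hardy--Littlewood maximal operator of $X$ over Euclidean balls. Next I check that the $LH$ condition used here, stated with the Euclidean distance and a base point $0$, coincides with the log-H\"older conditions in \cite{ZSY16}, which are formulated with the quasi-metric and a fixed reference point $x_0$. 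The equivalence of these formulations on unbounded spaces is \cite[Lemma 2.1]{AHH2015}. Changing the base point only changes $\log(e+\|x\|)$ by a bounded factor, so $c_\infty(p)$ changes by a constant. With these checks done, the cited theorem gives the inequality for every $u\in(1,\infty)$, since $p^->1$.

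If instead I want a self-contained argument, I would run the standard extrapolation scheme. The first step is the weighted vector-valued inequality
\[\int\Big(\sum_i (M_Df_i)^u\Big)^{r/u}v\,dw\lesssim\int\Big(\sum_i|f_i|^u\Big)^{r/u}v\,dw,\]
valid for $1<r<\infty$ and weights $v\in A_r(dw)$. This holds on any space of homogeneous type, by Andersen--John or by Fefferman--Stein duality with $M_D$ bounded on $L^{r}(v)$. The second step is Rubio de Francia extrapolation into $L^{p(\cdot)}(dw)$. Fix $1<r<p^-$ and write $q(\cdot)=p(\cdot)/r$. By duality, $\|F\|_{p(\cdot)}^r=\|F^r\|_{q(\cdot)}\sim\sup\int F^r h\,dw$, where the supremum runs over $h\ge0$ with $\|h\|_{q'(\cdot)}\le1$. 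Then I build the Rubio de Francia weight
\[\mathcal Rh=\sum_k \frac{M_D^kh}{(2\|M_D\|_{q'(\cdot)})^k}.\]
This weight lies in $A_1\subset A_r$, satisfies $h\le\mathcal Rh$, and has $\|\mathcal Rh\|_{q'(\cdot)}\le2$.

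The main obstacle is that the construction needs $M_D$ bounded on $L^{q'(\cdot)}(dw)$. I would get this from $q(\cdot)\in LH$, which holds because $1/q=r/p$ is again log-H\"older. From there, $M_D$ is bounded on $L^{q(\cdot)}$ by \cite[Corollary 1.8]{AHH2015}, and then on the dual exponent by \cite[Theorem 1.1]{K}; this requires $q^->1$, which is exactly why $r<p^-$. Finally, applying H\"older's inequality in $L^{q(\cdot)}$ finishes the proof.
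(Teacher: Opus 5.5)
Your proposal is correct and takes essentially the paper's route: the paper gives no separate proof and simply invokes the Fefferman--Stein vector-valued inequality of \cite[Theorem 2.7]{ZSY16} on $(\mathbb R^n,\|\cdot\|,dw)$, as in your first paragraph. If that theorem is stated for RD-spaces, also record the reverse doubling bound $\omega(B(x,\lambda r))\ge C^{-1}\lambda^n\omega(B(x,r))$ from the preliminaries. Your extrapolation sketch is a valid self-contained alternative, and there you can skip \cite[Theorem 1.1]{K}, since $q'(\cdot)\in LH$ with $1<(q')^-\le (q')^+<\infty$ directly.
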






\subsection{Dunkl--Calder\'on reproducing formula and   
Littlewood--Paley theory}
The Calder\'on reproducing formula in the Dunkl setting plays key role in the whole paper.
Now we recall the discrete Dunkl--Calder\'on reproducing formula on $L^2(\mathbb R^n)$ established in \cite{THHLL1}. 
Applying the following Calder\'on reproducing formula given in \cite{ADH},

\begin{equation}\label{ccrf}
f(x)=\int_0^\infty {\psi_{t}}\ast {q_{t}}\ast
f(x)\frac{dt}{t},
\end{equation}
where the integral converges in $L^2(\mathbb R^n)$ and ${q_t}f=t\frac{\partial}{\partial t}p_tf$ with $p_t$ is
the Dunkl--Poisson kernel, and ${\psi}\in C^\infty_0(B_d(0,1/4))$
is a radial function with $\int_{\mathbb R^nN}{\psi}(x)d\omega(x)=0$.
the authors in \cite{THHLL1} decompose $f$ by the following

$$f(x)=\int_0^\infty {\psi}_t\ast {q_t}\ast f(x)\frac{dt}{t}=T_M(f)(x) + R_1(f)(x) + R_M(f)(x),$$
where
$$T_M(f)(x)=-\ln r \cdot\sum\limits_{j=-\infty}^\infty\sum\limits_{Q\in {\mathcal Q}^j}w(Q) \psi_{j}(x,x_{Q})q_{j}\ast
f(x_{Q}),$$
$$R_1(f)(x)=-\sum\limits_{j=-\infty}^\infty\int_{r^{-j}}^{r^{-j+1}}
\Big[{\psi_t}\ast {q_t}\ast f(x)- \psi_{j}\ast q_{j}\ast
f(x)\Big]\frac{dt}{t}$$ and
$$R_M(f)(x)=-\ln r \cdot\sum\limits_{j=-\infty}^\infty\sum\limits_{Q\in {\mathcal Q}^j}\int_{Q}
\Big[\psi_{j}(x,y)q_{j}\ast f(y)-\psi_{j}(x,x_{Q})q_{j}\ast
f(x_{Q})\Big]d\omega(y),$$ where $\psi_j={\psi}_{r^j},
q_{j}={q}_{r^j}$ with $1<r\leqslant r_0$ for some fixed $r_0,
{\mathcal Q}^j$ is the collection of all ``$r$-dyadic cubes'' $Q$
with the side length $r^{-M-j}$ for $M$ is some fixed large integer,
and $x_{Q}$ is any fixed point in the cube $Q.$
By applying Coifman's decomposition of the identity on $L^2(\mathbb R^n)$
gives
$$I=T_M + R_1 + R_M.$$
The authors in \cite{THHLL1} showed that $R_1$ and $R_M$ are bounded on $L^p(\mathbb R^n, d\omega), 1<p<\infty,$ and moreover, $\|R_1+R_M\|_{p,p}< 1$ with  some a fixed $r_0>1$ and a fixed $M.$ This implies that $(T_M)^{-1},$ the inverse
of $T_M,$ exists and is bounded on $L^p(\mathbb R^n, d\omega), 1<p<\infty.$
Then the discrete type Dunkl--Calder\'on reproducing formula is given by the following theorem.
Note that the well-known discrete type Calder\'on reproducing formula was
first introduced by Frazier and Jawerth in \cite{FJ}.

\begin{theorem}[\cite{THHLL1}]
Let $p(\cdot)\in LH$ and $\frac{N}{N+1}<p^-\le p^+\le 1$. 
If $f\in L^2(\mathbb R^n,d\omega)$,
 then there exists a function $h:=(T_M)^{-1}f\in L^2(\mathbb R^n, d\omega)$ 
 such that $\|f\|_{L^2}\sim \|h\|_{L^2}$ and 
    \begin{align*}
    f(x)=\sum\limits_{j=-\infty}^\infty\sum\limits_{Q\in {\mathcal Q}^j}\omega(Q)
    \psi_{Q}(x,x_{Q})q_{Q}
    h(x_{Q}),
    \end{align*}
    where the series converges in $L^2(\mathbb R^n,d\omega)$ with $\psi_Q=\psi_{j}, q_{Q}=q_{j}$ when $Q\in {\mathcal Q}^j$ where $\{{\mathcal Q}^j\}$ is the collection of all ``r-dyadic cubes'' $Q$
     with the side length $r^{-M-j}$ for $M$ is some fixed large integer and $x_{Q}$ is any fixed point in the cube $Q.$
\end{theorem}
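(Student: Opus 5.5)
The plan is to derive the formula directly from Coifman's decomposition of the identity described just before the statement. The hypotheses on $p(\cdot)$ play no role in the $L^2$ statement itself; they only matter later, when the formula is extended to the variable Hardy spaces. So I would work purely in $L^2(\mathbb R^n,d\omega)$.

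First, I would use the identity $I=T_M+R_1+R_M$ on $L^2(\mathbb R^n,d\omega)$. This holds because the continuous Calder\'on formula \eqref{ccrf} converges in $L^2$, and $T_M$, $R_1$, $R_M$ are obtained by discretizing it in $t$ and then in space.

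Next, I would invoke the estimate from \cite{THHLL1} with $p=2$: $\|R_1+R_M\|_{2,2}<1$ for the fixed $r_0>1$ and large $M$. The Neumann series $\sum_{k\ge0}(R_1+R_M)^k$ then converges in operator norm. Hence $T_M=I-(R_1+R_M)$ is invertible on $L^2$, with
\[
\|(T_M)^{-1}\|_{2,2}\le \bigl(1-\|R_1+R_M\|_{2,2}\bigr)^{-1}.
\]
Set $h=(T_M)^{-1}f$. Then $\|h\|_{L^2}\lesssim\|f\|_{L^2}$. Conversely, $\|f\|_{L^2}=\|T_Mh\|_{L^2}\le(1+\|R_1+R_M\|_{2,2})\|h\|_{L^2}$, since $T_M$ is bounded as the difference of bounded operators. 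This gives $\|f\|_{L^2}\sim\|h\|_{L^2}$.

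Then $f=T_M h$. Writing this out with the definition of $T_M$ gives
\[
f(x)=-\ln r\sum_{j}\sum_{Q\in\mathcal Q^j}\omega(Q)\,\psi_j(x,x_Q)\,q_j h(x_Q).
\]
The constant $-\ln r$ is absorbed into the normalization of $\psi_Q$, or equivalently into $h$. This is the asserted expansion with $\psi_Q=\psi_j$ and $q_Q=q_j$.

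The main obstacle is convergence: one must show that the double series defining $T_Mh$ converges in $L^2$, and not just that $T_M$ is well defined as a whole. For this I would show unconditional convergence through the almost-orthogonality structure. For any finite set $F$ of cubes, I would estimate
\[
\Bigl\|\sum_{Q\in F}\omega(Q)\psi_Q(\cdot,x_Q)\lambda_Q\Bigr\|_{L^2}\lesssim\Bigl(\sum_{Q\in F}\omega(Q)|\lambda_Q|^2\Bigr)^{1/2}
\]
by duality. Pairing with $g\in L^2$ reduces the left side to $\sum_Q\omega(Q)\lambda_Q\,\psi_j^*g(x_Q)$. Applying Cauchy--Schwarz, it then suffices to show $\sum_j\sum_Q\omega(Q)|\psi_j^*g(x_Q)|^2\lesssim\|g\|_2^2$. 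This follows from the smoothness and compact support of $\psi_j$, which give $|\psi_jg(x_Q)|\lesssim M_D(\psi_jg)$-type bounds on $Q$, together with the discrete Littlewood--Paley $L^2$ estimate. The same argument with $q_j$ in place of $\psi_j$ shows that $\sum_Q\omega(Q)|q_jh(x_Q)|^2$ is summable with sum $\lesssim\|h\|_2^2$. Therefore the tails of the series tend to zero in $L^2$, and the series converges to $T_Mh=f$.
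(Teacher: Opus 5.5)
Your argument is the route the paper sketches just before the statement, taken from \cite{THHLL1}: Coifman's splitting $I=T_M+R_1+R_M$, the bound $\|R_1+R_M\|_{2,2}<1$, Neumann-series inversion of $T_M$, and $f=T_Mh$ with $h=(T_M)^{-1}f$, with the constant $-\ln r$ absorbed as you say. Your added $L^2$-convergence step correctly reduces to the sampling bound $\sum_j\sum_{Q\in\mathcal Q^j}\omega(Q)|\psi_jg(x_Q)|^2\lesssim\|g\|_{L^2}^2$, but that bound should be justified by the almost-orthogonality (discrete Littlewood--Paley) estimates of \cite{THHLL1}, not by a pointwise comparison with $M_D(\psi_jg)$. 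Smoothness alone only gives $|\psi_jg(x_Q)-\psi_jg(x)|\lesssim r^{-M}\widetilde M_Dg(x)$ at each scale, which is not square-summable over $j\in\mathbb Z$, so you must also use the cancellation of the difference kernel.
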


This discrete type wavelet decomposition leads to the following generalized discrete Littlewood--Paley function in  \cite{THHLL1}.
\begin{definition}\label{df1.2}
    For $f\in L^2(\mathbb R^n, d\omega), S(f),$ the \emph{discrete Littlewood--Paley square function} of $f,$ is defined by
    \begin{eqnarray}\label{square_function}
    S(f)(x):= \left\{
    \sum\limits_{j=-\infty}^\infty\sum\limits_{Q\in {\mathcal Q}^j}|q_{Q}f(x_{Q})|^2\chi_{Q}
    (x) \right\}^{1/2},
    \end{eqnarray}
    where $\chi_{Q}(x)$ is the characteristic function of the cube $Q.$
\end{definition}

We also recall the Littlewood--Paley theory on space of homogeneous type $(\mathbb R^n, \| \cdot\|, d\omega)$ in the sense of Coifman and Weiss. We begin with the following definition of the test functions in space of homogeneous type $(\mathbb R^n,\|\cdot\|,d\omega):$

    \begin{definition}\label{test} A function $f(x)$ defined on $\mathbb R^n$ is said to be a test function if there exits a constant $C$ such that for $0<\beta\leqslant 1, \gamma>0, r>0$ and $x_0\in \mathbb R^n,$
        \begin{enumerate}
            \item[(i)] $|f(x)|\leqslant \frac C{V(x, r+\|x-x_0\|)}\big(\frac{r}{r+\|x-x_0\|}\big)^\gamma;$
            \item[(ii)] $\displaystyle |f(x)-f(x')|\leqslant C\big(\frac{\|x-x'\|}{r+\|x-x_0\|}\big)^\beta\frac{1}{V(x,r+\|x-x_0\|)}\big(\frac{r}{r+\|x-x_0\|}\big)^\gamma, \\ \quad for \|x-x'\|\leqslant \frac{1}{2}(r+\|x-x_0\|);$
            \item[(iii)] $\displaystyle \int_{\R^N} f(x)d\omega(x)=0.$
        \end{enumerate}

        We denote such a test function by $f\in \mathcal M(\beta,\gamma,r,x_0)$ and $\|f\|_{\mathcal M(\beta,\gamma,r,x_0)}$, the norm in $\mathcal M(\beta,\gamma,r,x_0),$ is defined by the smallest $C$
        satisfying the above conditions (i) and (ii).
    \end{definition}

    Applying Coifman's approximation to the identity and decomposition of the identity operator together with the Calder\'on--Zygmund operator theory,
    the discrete Calder\'on reproducing formula in space of homogeneous type is given by the following

    \begin{theorem}\label{dCRh}
        Let $\{S_k\}_{k\in \Bbb Z}$ be a Coifman's approximation to the identity and set
        ${D}_k =: {S}_k - {S}_{k-1}$. Then there exist two families of operators
        $\{\widetilde{D}_k\}_{k\in \Bbb Z}$ and $\widetilde{\widetilde{D}}_k$ such that for any fixed $x_{Q}\in Q$ with $Q\in {\mathcal Q}^k$ with $k\in \Bbb Z$ and ${\mathcal Q}^k$ are "$r-$ dyadic cubes"
        with the side length $r^{-M-k},$
        $$f(x)=\sum\limits_{k=-\infty}^\infty\sum\limits_{Q\in {\mathcal Q}^k}\omega(Q){\widetilde D}_k(x,x_{Q})D_k(f)(x_{Q})=\sum\limits_{k=-\infty}^\infty\sum\limits_{Q\in {\mathcal Q}^k}\omega(Q)D_k(x,x_{Q})
        \widetilde{\widetilde{D}}_k(f)(x_{Q}),$$
        where the series converge in $L^p(\omega), 1<p<\infty, \mathcal M(\beta,\gamma,r,x_0),$ and in $(\mathcal M(\beta,\gamma,r,x_0))^\prime,$ the dual of in $\mathcal M(\beta,\gamma,r,x_0).$
        Moreover, the kernels of the operators $\widetilde{D}_k$
        satisfy the following conditions: for $0<\varepsilon\leqslant 1,$
        \begin{enumerate}
            \item[(i)] $|\widetilde{D}_k(x,y)|\leqslant C\frac1{V_k(x)+V_k(y)+V(x,y)}\Big(\frac{r^{-k}}{r^{-k}+\|x-y\|}\Big)^\varepsilon;$
            \item[(ii)] $|\widetilde{D}_k(x,y)-\widetilde{D}_k(x',y)|\leqslant C\Big(\frac{\|x-x'\|}{r^{-k}+\|x-x'\|}\Big)^\varepsilon\frac1{V_k(x)+V_k(y)+V(x,y)}\Big(\frac{r^{-k}}{r^{-k}+\|x-y\|}\Big)^\varepsilon$,
            \item[]for $\|x-x'\|\leqslant (r^{-k}+\|x-y\|)/2;$
            \item[(iii)] $\displaystyle \int_{\Bbb R^N} \widetilde{D}_k(x,y)d\omega(x)=0\qquad \text{for all}\ y\in \Bbb R^n;$
            \item[(iv)] $\displaystyle \int_{\Bbb R^N} \widetilde{D}_k(x,y)d\omega(y)=0\qquad \text{for all}\ x\in \Bbb R^n.$
            \end{enumerate}
And the kernels of $\widetilde{\widetilde{D}}_k(x,y)$ satisfy the
conditions \rm{(iii)--(iv)} and \rm{(i)--(ii)} with $x$ and $y$
interchanged.
    \end{theorem}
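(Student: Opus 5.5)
The plan follows Coifman's construction on the space of homogeneous type $(\mathbb R^n,\|\cdot\|,d\omega)$, as in Han's discrete Calder\'on reproducing formula.

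\emph{Step 1 (discretized identity).} Starting from $I=\sum_k D_k$ (strongly in $L^2$), I write $I=\sum_{k}\sum_{l}D_{k+l}D_k$ and split
\[
I=\sum_{|l|\le L}\sum_k D_{k+l}D_k+\sum_{|l|>L}\sum_k D_{k+l}D_k=:T_L+R_L .
\]
I then discretize $T_L$ using the $r$-dyadic cubes $\mathcal Q^k$ of side length $r^{-M-k}$:
\[
T_L(f)(x)=\sum_k\sum_{Q\in\mathcal Q^k}\omega(Q)\,D_k^L(x,x_Q)\,D_k(f)(x_Q)+R_{L,M}(f)(x),
\]
where $D_k^L=\sum_{|l|\le L}D_{k+l}$. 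The remainder $R_{L,M}$ collects the differences $D_k^L(x,y)D_k(f)(y)-D_k^L(x,x_Q)D_k(f)(x_Q)$ integrated over $y\in Q$.

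\emph{Step 2 (remainders are small).} I will show that $R_L$ and $R_{L,M}$ are Calder\'on--Zygmund operators whose kernels satisfy size and smoothness bounds with constants $C2^{-\delta L}$ and $C\,r^{-\delta M}$ respectively. They also satisfy the cancellation conditions $R(1)=R^*(1)=0$.
\begin{itemize}
\item For $R_L$, the almost orthogonality estimate $|D_{k+l}D_k(x,y)|\lesssim r^{-|l|\varepsilon'}\times(\text{standard kernel at scale } r^{-k\wedge (k+l)})$ gives the decay in $L$ after summing over $k$.
\item For $R_{L,M}$, I use the regularity of $D_k^L$ and of $D_k(f)$ on cubes of diameter $r^{-M-k}$.
\end{itemize}
By the $T1$ theorem on spaces of homogeneous type, both remainders are bounded on $L^p(\omega)$ for $1<p<\infty$ and on $\mathcal M(\beta,\gamma,r,x_0)$ with small norms. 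The test-function bound requires checking that compositions of such operators with test functions remain test functions, with norm control. Fixing $L$ and then $M$ large makes $\|R_L+R_{L,M}\|<1$, so that $T_N:=I-R_L-R_{L,M}$ is invertible by a Neumann series on $L^p$ and on $\mathcal M$, and by duality on $\mathcal M'$.

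\emph{Step 3 (the two formulas).} Setting $f=T_N^{-1}T_Nf$ and defining $\widetilde D_k(x,y)=T_N^{-1}[D_k^L(\cdot,y)](x)$ yields the first formula. The kernel estimates (i)--(ii) for $\widetilde D_k$ follow because $D_k^L(\cdot,y)$ is, uniformly in $y$, a test function in $\mathcal M(\varepsilon,\varepsilon,r^{-k},y)$ (with normalized norm), and $T_N^{-1}$ is bounded on that space. Property (iii) follows from $T_N^{-1}$ preserving the mean-zero class. Property (iv) follows from $\int D_k^L(x,y)\,d\omega(y)=0$ together with the linearity of $T_N^{-1}$ applied to the integral in $y$.

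For the second formula I apply the same construction to $T_N^*$, discretizing instead as $\sum\omega(Q)D_k(x,x_Q)D_k^L(f)(x_Q)$ and setting $\widetilde{\widetilde D}_k=D_k^L T_N^{-1}$. Convergence in $L^p$ and $\mathcal M$ follows from the boundedness of the partial-sum operators, and convergence in $\mathcal M'$ follows by duality.

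The main obstacle is Step 2 in the test-function space: showing that $R_{L,M}$ maps $\mathcal M(\beta,\gamma,r,x_0)$ to itself with norm $\lesssim r^{-\delta M}$. This requires splitting the kernel estimate into the regions $\|x-y\|$ small and large relative to $r^{-k}$, and using the doubling and reverse doubling of $\omega$ to sum the resulting geometric series in $k$.
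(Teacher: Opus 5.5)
Your proposal is correct and follows the route the paper relies on. The paper gives no proof of its own; it quotes this as Han's discrete Calder\'on reproducing formula on $(\mathbb R^n,\|\cdot\|,d\omega)$, obtained from Coifman's decomposition of the identity and Calder\'on--Zygmund theory, which is exactly your splitting $I=T_L+R_L$, discretization, small-remainder estimates and Neumann-series inversion of $T_N$.

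Only minor points need fixing. The second family should be $\widetilde{\widetilde D}_k=D_k^L(T_N^*)^{-1}$, so that $\widetilde{\widetilde D}_k(x,y)=\widetilde D_k(y,x)$ by the symmetry of $D_k$. Also, $T_N^{-1}$ is bounded on $\mathcal M(\beta,\gamma,r,x_0)$ only for $\beta,\gamma$ strictly below the regularity exponent of $\{S_k\}$; this is where the double-difference regularity of the remainder kernels enters. Consequently the exponent in (i)--(ii) and the convergence in the test-function space both come with an arbitrarily small loss.
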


\section {Variable Dunkl--Carleson measure space}

In this section, we will define the variable Dunkl--Carleson measure spaces
$\mathcal C^{p(\cdot)}_D(\mathbb R^n)$ in terms of discrete Littlewood--Paley theory.
Then we show that $\mathcal C^{p(\cdot)}_D(\mathbb R^n)$ is the dual space of the 
variable Dunkl--Hardy space $H_D^{p(\cdot)}(\mathbb R^n)$ via using
the duality inequality and the sequence spaces in the variable 
setting.
Before we state the definition of variable Dunkl--Carleson measure space, we first recall
the definition of the variable Dunkl--Hardy space in \cite{Tan26-1}.


\begin{definition}\label{df1.3}
    For $p(\cdot)\in \mathcal P^0$,  the variable Dunkl--Hardy space $H_D^{p(\cdot)}{(\mathbb R^n)}$ is defined as the completion of
    $$
    \left\{f\in L^2(\mathbb R^n): S(f)\in L^{p(\cdot)}(\mathbb R^n, d\omega)\right\}
    $$
     under the quasi-norm
$\|f\|_{H_D^{p(\cdot)}{(\mathbb R^n)}}:=\|S(f)\|_{L^{p(\cdot)}}(\mathbb R^n, d\omega)$.
\end{definition}

If $p(\cdot)\in \mathcal P\cap LH$, then the following lemma yields that $H_D^{p(\cdot)}{(\mathbb R^n)} \cong  L^{p(\cdot)}(\mathbb R^n, d\omega)$
with equivalent norms. 
The Littlewood--Paley characterization for variable Dunkl--Lebesgue spaces $L^{p(\cdot)}(\mathbb R^n, d\omega)$ is also given in \cite{Tan26-1}.
For more details on variable Hardy spaces, see \cite{S, Tan2, Tan22, YZZ}.

Then the discrete Calder\'on reproducing formula for $f\in L^2(\mathbb R^n,d\omega)$
with respect to the variable Dunkl--Hardy norms are given by the following

\begin{theorem}[\cite{Tan26-1}]\label{th1.1}
Let $p(\cdot)\in LH$ and $\frac{N}{N+1}<p^-\le p^+\le 1$. 
If $f\in L^2(\mathbb R^n,d\omega)\cap H_D^{p(\cdot)}(\mathbb R^n, d\omega)$,
 then there exists a function $h:=(T_M)^{-1}f\in L^2(\mathbb R^n, d\omega)\cap H_D^{p(\cdot)}(\mathbb R^n)$ 
 such that $\|f\|_{L^2}\sim \|h\|_{L^2}$ with $\|h\|_{H_D^{p(\cdot)}}\sim \|f\|_{H_D^{p(\cdot)}}$ and 
    \begin{align*}
    f(x)=\sum\limits_{j=-\infty}^\infty\sum\limits_{Q\in {\mathcal Q}^j}\omega(Q)
    \psi_{Q}(x,x_{Q})q_{Q}
    h(x_{Q}),
    \end{align*}
    where the series converges in $L^2(\mathbb R^n,d\omega)$ and $H_D^{p(\cdot)}{(\mathbb R^n)}$ with $\psi_Q=\psi_{j}, q_{Q}=q_{j}$ when $Q\in {\mathcal Q}^j$ where $\{{\mathcal Q}^j\}$ is the collection of all ``r-dyadic cubes'' $Q$
     with the side length $r^{-M-j}$ for $M$ is some fixed large integer and $x_{Q}$ is any fixed point in the cube $Q.$
\end{theorem}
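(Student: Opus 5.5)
The plan is to take the decomposition $I=T_M+R_1+R_M$ and show that, for suitable $r_0>1$ and $M$ large, $R:=R_1+R_M$ is a contraction in the $H_D^{p(\cdot)}$ quasi-norm on $L^2\cap H_D^{p(\cdot)}$. The bound $\|R\|_{L^2\to L^2}<1$ is already available from \cite{THHLL1}. Once we have
\[
\|Rf\|^{\underline p}_{H_D^{p(\cdot)}}\le \tfrac12\|f\|^{\underline p}_{H_D^{p(\cdot)}},
\qquad \underline p:=p^-\le 1,
\]
the Neumann series $h:=(T_M)^{-1}f=\sum_{m\ge0}R^m f$ converges in both $L^2$ and $H_D^{p(\cdot)}$. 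Here we use that $\|\cdot\|^{\underline p}_{L^{p(\cdot)}}$ is subadditive, since $\||g|^{\underline p}\|_{L^{p(\cdot)/\underline p}}$ is a norm. This gives $\|h\|_{H_D^{p(\cdot)}}\lesssim\|f\|_{H_D^{p(\cdot)}}$ and $\|h\|_{L^2}\lesssim \|f\|_{L^2}$. The reverse inequalities follow from boundedness of $T_M=I-R$ on both spaces, because $f=T_Mh$. Expanding $T_Mh$ then gives the stated series, after absorbing $-\ln r$ into $\psi_Q$. Convergence of the series in $H_D^{p(\cdot)}$ follows from the same almost-orthogonality estimates applied to the tails $\sum_{|j|>L}$, together with dominated convergence in $L^{p(\cdot)}$ (valid since $p^+<\infty$).

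The core step is the estimate $\|S(Rf)\|_{L^{p(\cdot)}}\le C\,\eta(r_0,M)\|S(f)\|_{L^{p(\cdot)}}$ with $\eta\to0$. To prove it, I would write $f=\sum_{k}\sum_{Q'\in\mathcal Q^k}\omega(Q')\widetilde D_k(\cdot,x_{Q'})D_k f(x_{Q'})$ using Theorem \ref{dCRh}, and compute $q_{Q}R\widetilde D_k(\cdot,x_{Q'})$ for $Q\in\mathcal Q^j$. Since $\psi$ has compact support and vanishing mean, and $q_t$ is a Dunkl--Poisson derivative with smoothness and decay in the $d$-metric, we get an almost orthogonality bound of the form
\[
|q_j R \widetilde D_k(x_Q,x_{Q'})|\lesssim \eta\, r^{-|j-k|\varepsilon'}\frac{1}{V(x_Q,r^{-j\wedge k}+d(x_Q,x_{Q'}))}\Big(\frac{r^{-j\wedge k}}{r^{-j\wedge k}+d(x_Q,x_{Q'})}\Big)^{\gamma}.
\]
The smallness factor $\eta$ comes from the differences appearing in $R_1$ (variation in $t$ over $[r^{-j},r^{-j+1}]$, which is of order $r-1$) and in $R_M$ (variation in $y$ within cubes of side $r^{-M-j}$, which is of order $r^{-M}$).

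The main obstacle is converting this kernel bound into the $L^{p(\cdot)}$ estimate when $p^-\le1$, using the Dunkl metric $d$ rather than the Euclidean one. The standard route is the Frazier--Jawerth lemma: sum over $Q'\in\mathcal Q^k$ and bound the result by $\big[M_D\big(\sum_{Q'}|D_kf(x_{Q'})|^{\theta}\chi_{Q'}\big)(x)\big]^{1/\theta}$ with $\frac{N}{N+\gamma}<\theta<p^-$. Because the decay is in $d$, the maximal function has to be replaced by $\widetilde M_D$, summing over the $G$-orbit; this is where $G$-invariance of $p(\cdot)$ enters, since it makes $\|M_Dg(\sigma\cdot)\|_{L^{p(\cdot)}}=\|M_Dg\|_{L^{p(\cdot)}}$. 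We then apply Lemma \ref{FS} with exponent $p(\cdot)/\theta\in\mathcal P\cap LH$ and $u=2/\theta$, using Cauchy--Schwarz in $k$ with the factor $r^{-|j-k|\varepsilon'}$. Choosing $\gamma$ with $\frac{N}{N+\gamma}<p^-$ is precisely where the hypothesis $p^->\frac{N}{N+1}$ is needed. Finally, the discrete square function built from $D_k$ must be compared with $S(f)$ built from $q_j$. This comparison follows by the same Frazier--Jawerth argument in the opposite direction, because the Plancherel--P\'olya type equivalence is independent of the choice of $x_Q$.
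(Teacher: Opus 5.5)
The paper does not prove this statement; it quotes it and only remarks that $\|R_1+R_M\|<1$ yields $(T_M)^{-1}$. Your outer architecture is consistent with that remark and is fine: the Neumann series for $T_M=I-R$, subadditivity of $\|\cdot\|_{L^{p(\cdot)}}^{p^-}$, Fatou to pass to the limit, and expanding $T_Mh$. The gap is in the core estimate, and it is circular.

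Expanding $f$ by Coifman's formula (Theorem \ref{dCRh}) and using almost orthogonality of $q_QR\widetilde D_k$ gives only $\|S(Rf)\|_{L^{p(\cdot)}}\lesssim\eta\,\|S_{cw}(f)\|_{L^{p(\cdot)}}$. Everything therefore rests on $\|S_{cw}(f)\|_{L^{p(\cdot)}}\lesssim\|S(f)\|_{L^{p(\cdot)}}$, and this does not follow ``by the same Frazier--Jawerth argument in the opposite direction.''

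\begin{itemize}
\item That argument needs an expansion of $f$ in the wavelets $\psi_Q$ whose coefficients are controlled in $s^{p(\cdot)}$ by $\{q_Qf(x_Q)\}$. The only such expansion available is $f=T_Mh=\sum\omega(Q)\psi_Q(\cdot,x_Q)q_Qh(x_Q)$ with $h=(T_M)^{-1}f$. It yields $\|S_{cw}(f)\|\lesssim\|S(h)\|$, and bounding $\|S(h)\|$ by $\|S(f)\|$ is exactly the theorem you are proving.
\item ``Independence of the choice of $x_Q$'' is a Plancherel--P\'olya inequality for the family $\{q_t\}$. Coifman's formula gives it only for $\{D_k\}$.
\item Going through the continuous formula \eqref{ccrf} instead bounds $D_kf$ by $\sup_{y\in Q,\,t\in[r^{-j},r^{-j+1}]}|q_tf(y)|$, not by the sampled value $q_jf(x_Q)$. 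Since $p^-\le1$, you cannot trade the supremum for an average: the Frazier--Jawerth lemma with $\theta<1$ needs one number per cube, and Jensen goes the wrong way.
\item The natural absorption ``$\sup\le$ sampled value $+\,\eta\cdot\sup$'' requires knowing a priori that the sup-square function lies in $L^{p(\cdot)}$. That does not follow from $f\in L^2$ and $S(f)\in L^{p(\cdot)}$.
\end{itemize}

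What is missing is an independent proof of the hard half of $H_D^{p(\cdot)}\sim H_{cw}^{p(\cdot)}$. Concretely, you need a genuine Plancherel--P\'olya inequality for $\{q_t\}$ with a justified absorption step, for instance via truncated or weighted Peetre-type maximal functions that are finite a priori.

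Your method does give something non-circular. Run with $D_{k'}R\widetilde D_k$ in place of $q_QR\widetilde D_k$, on an $H^{p(\cdot)}_{cw}$ built from $r$-independent (e.g.\ dyadic) scales, it shows that $R$ is small on $L^2\cap H^{p(\cdot)}_{cw}$, so $(T_M)^{-1}$ is bounded there. Combined with the easy direction $\|S(g)\|\lesssim\|S_{cw}(g)\|$ and Theorem \ref{relation} as an external input, that would close the argument. However, Theorem \ref{relation} is quoted from the same source as the present result, so you must check that its proof does not itself use this reproducing formula.

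There is also a quantitative problem with your claim that $\eta(r_0,M)\to0$. Your Cauchy--Schwarz step in $k$ costs $\sum_k r^{-|j-k|\varepsilon'}\approx 2/(\varepsilon'\ln r)$. A first-order bound on the kernel of $q_jR_1\widetilde D_k$, coming from the variation in $t$, is only of order $r-1$. As written, the $R_1$ contribution to $C\eta$ therefore stays of order one as $r_0\to1$. To fix this you need either a sharper second-order (telescoping) estimate for $R_1$, or to measure smallness in a norm built on $r$-independent scales. In the latter, $R_1$ enters only through its $L^2$ norm and Calder\'on--Zygmund constants, both $O(r-1)$, together with $R_1(1)=R_1^*(1)=0$.
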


Moreover, the discrete Calder\'on reproducing formula in Theorem \ref{dCRh} leads the
    following discrete square function on space of homogeneous type
    $(\Bbb R^n, \|\cdot\|, d\omega).$
    \begin{definition}\label{scw}
        Suppose that $f\in (\mathcal M(\beta,\gamma,r,x_0))^\prime.$ $S_{cw}(f),$ the Littlewood--Paley square function of $f$ on space of homogeneous type $(\mathbb R^n, \|\cdot\|, d\omega),$ is defined by
        $$S_{cw}(f)(x)=\bigg\{\sum\limits_{k=-\infty}^\infty\sum\limits_{Q\in {\mathcal Q}^k}
        |D_kf(x_{Q})|^2\chi_{Q}(x)\bigg\}^{1/2},$$
        where $D_k, {\mathcal Q}^k$ are same as given in Theorem \ref{dCRh}. See \cite{HMY} for more details.
    \end{definition}
\begin{definition}\label{hpcw}
For $p(\cdot)\in\mathcal P^0$,  the space $H^{p(\cdot)}_{cw}(\mathbb R^n,d\omega)$ is the collection of all distributions $f\in (\mathcal M(\beta,\gamma,r,x_0))^\prime$ such that
    $\|f\|_{H^{p(\cdot)}_{cw}(\mathbb R^n,d\omega)}=\|S_{cw}(f)\|_{L^{p(\cdot)}(\mathbb R^n,dw)}<\infty,$ where the square function $S_{cw}(f)$ is defined in the Definition \ref{scw}.
\end{definition}
The relationship between the Dunkl--Hardy space $H_D^{p(\cdot)}(\mathbb R^n)$ and Hardy space $H^{p(\cdot)}_{cw}(\mathbb R^n,d\omega)$ is given by the following

\begin{theorem}\cite{Tan26-1}\label{relation}
Let $p(\cdot)\in\mathcal P^0\cap LH$ with $\frac{{ N}}{{ N}+1}<p^-\le p^+<\infty.$ Then for any $f\in H_D^{p(\cdot)}{(\mathbb R^n)}$ we have
$$\|f\|_{H_D^{p(\cdot)}{(\mathbb R^n)}}\sim \|f\|_{H^{p(\cdot)}_{cw}(\mathbb R^n,d\omega)}.$$
\end{theorem}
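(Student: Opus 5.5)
To prove Theorem \ref{relation}, I would compare the two discrete square functions $S(f)$ and $S_{cw}(f)$ pointwise in the vector-valued sense, using almost orthogonality together with the Fefferman--Stein inequality of Lemma \ref{FS}. Since both norms are defined through discrete Littlewood--Paley pieces, it suffices to prove two inequalities, $\|S_{cw}(f)\|_{L^{p(\cdot)}}\lesssim \|S(f)\|_{L^{p(\cdot)}}$ and the reverse, for $f\in L^2(\mathbb R^n,d\omega)\cap H^{p(\cdot)}_D(\mathbb R^n)$. The general case then follows by density and completion.

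For the first inequality, I would apply the discrete Dunkl--Calder\'on formula of Theorem \ref{th1.1}, writing $f=\sum_j\sum_{Q\in\mathcal Q^j}\omega(Q)\psi_Q(\cdot,x_Q)q_Qh(x_Q)$ with $\|S(h)\|_{L^{p(\cdot)}}\sim\|S(f)\|_{L^{p(\cdot)}}$. Next I apply $D_k$ and evaluate at $x_{Q'}$, $Q'\in\mathcal Q^k$. The key input is the almost orthogonality estimate
\[
|D_k\psi_j(\cdot,x_Q)(x_{Q'})|\lesssim r^{-|j-k|\varepsilon'}\frac{1}{V_{j\wedge k}(x_Q)+V(x_Q,x_{Q'})}\Big(\frac{r^{-(j\wedge k)}}{r^{-(j\wedge k)}+\|x_Q-x_{Q'}\|}\Big)^{\gamma}.
\]
This estimate uses the smoothness and cancellation of $D_k$ together with the decay, smoothness and cancellation of the Dunkl kernel $\psi_j$. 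For the latter I would invoke the known size/smoothness bounds of $\psi_t$ in terms of $d(x,y)$ and $\|x-y\|$. The main obstacle is this step: the Dunkl kernels decay only in the orbit distance $d$, not in $\|\cdot\|$. I would handle it by bounding $1/\omega(B(x,d(x,y)))$ by a sum over $\sigma\in G$ of Euclidean-type kernels centered at $\sigma(y)$. This is why $\widetilde M_D$ and the $G$-invariance of $p(\cdot)$ enter.

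Given the almost orthogonality estimate, the standard Frazier--Jawerth lemma controls the discrete sum: for $\frac{N}{N+\gamma}<\theta<p^-$,
\[
\sum_{Q\in\mathcal Q^j}\omega(Q)\frac{(\cdots)^\gamma|q_Qh(x_Q)|}{V_{j\wedge k}+V}\lesssim r^{(j-k)_+N(1/\theta-1)}\Big[\widetilde M_D\Big(\sum_{Q\in\mathcal Q^j}|q_Qh(x_Q)|^\theta\chi_Q\Big)\Big]^{1/\theta}(x).
\]
The exponential gain $r^{-|j-k|\varepsilon'}$ absorbs the factor $r^{(j-k)_+N(1/\theta-1)}$, provided $p^->N/(N+1)$ and $\varepsilon',\gamma$ are chosen close to $1$. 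Applying Cauchy--Schwarz in $j$ then gives
\[
S_{cw}(f)\lesssim\Big\{\sum_j\Big[\widetilde M_D\Big(\sum_Q|q_Qh(x_Q)|^\theta\chi_Q\Big)\Big]^{2/\theta}\Big\}^{1/2}.
\]
Taking $L^{p(\cdot)}$ norms and rescaling to $L^{p(\cdot)/\theta}$ with $\ell^{2/\theta}$, Lemma \ref{FS} applies, since $p(\cdot)/\theta\in\mathcal P\cap LH$ and each $M_D(\cdot\circ\sigma)$ is handled by $G$-invariance. This yields $\|S_{cw}(f)\|_{L^{p(\cdot)}}\lesssim\|S(h)\|_{L^{p(\cdot)}}\sim\|S(f)\|_{L^{p(\cdot)}}$.

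The reverse inequality is symmetric. I would expand $f$ with the reproducing formula of Theorem \ref{dCRh}, $f=\sum_k\sum_{Q}\omega(Q)\widetilde D_k(\cdot,x_Q)D_kf(x_Q)$, apply $q_j$ at $x_{Q'}$, and use the same almost orthogonality estimate with the roles of the two kernels exchanged. The rest of the argument is identical, giving $\|S(f)\|_{L^{p(\cdot)}}\lesssim\|S_{cw}(f)\|_{L^{p(\cdot)}}$. One point needs care: the pointwise values $q_Qh(x_Q)$ depend on the choice of $x_Q$. I would handle this, where needed, by replacing them with suprema or infima over $Q$; these are controlled by the same argument, as in the standard Frazier--Jawerth independence lemma.
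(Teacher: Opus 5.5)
The paper only quotes this result from its source, so I am judging your argument on its own. Your architecture is the standard one: the two discrete reproducing formulas, almost orthogonality between Coifman's $D_k$ and the Dunkl kernels, the Frazier--Jawerth lemma, and Lemma \ref{FS} in $L^{p(\cdot)/\theta}(\ell^{2/\theta})$. However, the key almost orthogonality estimate does not follow from the ingredients you list. The device you offer for the ``main obstacle'' also misses the real difficulty.

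Take $D_k\psi_j(\cdot,x_Q)(x_{Q'})$ with $j>k$, so the Dunkl kernel sits at the finer scale $r^{-j}<r^{-k}$. This is exactly the regime where the Frazier--Jawerth lemma costs $r^{(j-k)N(1/\theta-1)}$, so you need a gain close to $r^{-(j-k)}$. That gain can only come from $\int\psi_j(z,x_Q)\,d\omega(z)=0$ played against the Euclidean smoothness of $D_k(x_{Q'},\cdot)$. But $\psi_j(\cdot,x_Q)$ lives on $\bigcup_{\sigma\in G}B(\sigma(x_Q),r^{-j})$, and its cancellation is global over the orbit, not ball by ball. After subtracting $D_k(x_{Q'},\sigma(x_Q))$ on each ball you are left with $\sum_\sigma c_\sigma D_k(x_{Q'},\sigma(x_Q))$, where $c_\sigma=\int_{B(\sigma(x_Q),r^{-j})}\psi_j(z,x_Q)\,d\omega(z)$ and only $\sum_\sigma c_\sigma=0$ is known. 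When $\|\sigma(x_Q)-x_Q\|\gtrsim r^{-k}$, the smoothness of $D_k$ gives nothing. The size bound $|\psi_j(z,x_Q)|\lesssim\omega(B(x_Q,r^{-j}))^{-1}\chi_{\{d(z,x_Q)<r^{-j}\}}$ only yields $|c_\sigma|=O(1)$, so there is no decay in $j-k$ at all. Dominating $1/\omega(B(x,d(x,y)))$ by a sum over $\sigma\in G$ only reshapes the size bound of the final answer; it does not restore local cancellation. Relatedly, the estimate you display, with $\|x_Q-x_{Q'}\|$ and $V(x_Q,x_{Q'})$, is false as written; at best a $G$-summed version holds.

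The missing input is the extra Euclidean decay of Dunkl kernels. Bounds of the type $|\psi_t(x,y)|\lesssim\omega(B(x,t))^{-1}\frac{t}{t+\|x-y\|}\chi_{\{d(x,y)<t\}}$, and the analogous bound for $q_t$, are known from the work of Dziubański and Hejna. This is the same phenomenon that the factor $(d(x,y)/\|x-y\|)^\varepsilon$ in Definition \ref{df1.12} encodes. It gives $|c_\sigma|\lesssim\min\{1,r^{-j}/\rho\}$ with $\rho=\|\sigma(x_Q)-x_Q\|$. Writing $\sum_\sigma c_\sigma[D_k(x_{Q'},\sigma(x_Q))-D_k(x_{Q'},x_Q)]$ and using the smoothness of $D_k$ where it applies, you get the factor $\min\{1,r^{-j}/\rho\}\min\{1,(\rho/r^{-k})^{\varepsilon}\}\le r^{-(j-k)\varepsilon}$ for $\varepsilon\le1$. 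This is the order-one gain consistent with the threshold $N/(N+1)$. The same gap, with the same cure, appears in your reverse direction for $q_j\widetilde D_k(\cdot,x_Q)(x_{Q'})$ with $j>k$, where the cancellation of $q_j(x_{Q'},\cdot)$ is spread over the orbit of $x_{Q'}$.

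There is also a smaller gap. Theorem \ref{th1.1} gives $\|S(h)\|_{L^{p(\cdot)}}\sim\|S(f)\|_{L^{p(\cdot)}}$ only for $p^+\le1$. For $p^->1$ you can fall back on identifying both spaces with $L^{p(\cdot)}(\mathbb R^n,d\omega)$. For $p^-\le1<p^+$, however, you still need $T_M^{-1}$ to be bounded on $H^{p(\cdot)}_D$, i.e.\ $\|R_1+R_M\|<1$ there, and you have not supplied this.
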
    

Now we define variable Dunkl--Carleson measure space as follows.
\begin{definition}\label{cmo}
    For $p(\cdot)\in \mathcal P^0$,  the variable Dunkl--Carleson measure space 
    $\mathcal C^{p(\cdot)}_D(\mathbb R^n)$ is defined as the completion of
    $$
    \left\{f\in L^2(\mathbb R^n, d\omega): \|f\|_{\mathcal C^{p(\cdot)}_D}<\infty\right\}.
    $$
     The quasi-norm of $\mathcal C^{p(\cdot)}_D(\mathbb R^n)$ is defined by
 $$\|f\|_{\mathcal C^{p(\cdot)}_D} := \sup\limits _{P}
    \left\{ {\omega(P)\over\|\chi_P\|^2_{L^{p(\cdot)}}}
    \sum\limits_{ Q \subseteq P }\omega(Q)
     \big|\psi_{Q}f (x_Q)
    \big|^2 \right\}^{1/2},$$
    where $P$ runs over all dyadic cubes, $\psi$ is given in Theorem \ref{th1.1}, $\psi_{Q}=\psi_{j}$ when $Q\in {\mathcal Q}^j$ and $x_Q$ are any fixed points in $Q$.
\end{definition}

The following duality inequality plays a crucial role for
characterizing the dual spaces of variable Dunkl--Hardy spaces.

\begin{proposition}\label{holder}
  Let $p(\cdot)\in LH$ with $\frac{{N}}{{N}+1}<p^-\le p^+\le 1$.
    For $f, g\in L^2{(\mathbb R^n, d\omega)}$, then
    
    $$|\langle f, g\rangle|\leqslant C \|f\|_{H_D^{p(\cdot)}}\|g\|_{\mathcal C_D^{p(\cdot)}}.$$
   
\end{proposition}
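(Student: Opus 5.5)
The plan is the Frazier--Jawerth duality argument, done at the level of coefficients. First I would use a reproducing formula dual to Theorem \ref{th1.1}, so that the coefficients of $f$ are $q_Q$-type and those of $g$ are $\psi_Q$-type. Since $f\in L^2\cap H_D^{p(\cdot)}$, Theorem \ref{th1.1} gives $h=(T_M)^{-1}f$ and
$$\langle f,g\rangle=\sum_j\sum_{Q\in\mathcal Q^j}\omega(Q)\,q_Qh(x_Q)\,\langle\psi_Q(\cdot,x_Q),g\rangle .$$
Because $\psi$ is radial, $\langle\psi_Q(\cdot,x_Q),g\rangle=\psi_Q g(x_Q)$ up to the harmless reflection $\widetilde g$. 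Interchanging sum and pairing is justified by $L^2$ convergence. Hence
$$|\langle f,g\rangle|\le\sum_Q\omega(Q)\,|q_Qh(x_Q)|\,|\psi_Qg(x_Q)|.$$
The first sequence is controlled by $\|S(h)\|_{L^{p(\cdot)}}\sim\|f\|_{H_D^{p(\cdot)}}$. The second is exactly the sequence appearing in $\|g\|_{\mathcal C_D^{p(\cdot)}}$.

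The core step is a sequence-space inequality
$$\sum_Q\omega(Q)|s_Q||t_Q|\lesssim\Big\|\Big(\sum_Q|s_Q|^2\chi_Q\Big)^{1/2}\Big\|_{L^{p(\cdot)}}\sup_P\Big(\frac{\omega(P)}{\|\chi_P\|^2_{L^{p(\cdot)}}}\sum_{Q\subseteq P}\omega(Q)|t_Q|^2\Big)^{1/2}.$$
I would prove it by the stopping-time decomposition. Set $s=\{s_Q\}$ and $S(s)=(\sum|s_Q|^2\chi_Q)^{1/2}$. For $k\in\mathbb Z$ let $\Omega_k=\{S(s)>2^k\}$ and
$$B_k=\{Q:\omega(Q\cap\Omega_k)>\tfrac12\omega(Q),\ \omega(Q\cap\Omega_{k+1})\le\tfrac12\omega(Q)\}.$$
Let $\widetilde\Omega_k=\{M_D\chi_{\Omega_k}>1/2\}$; each $Q\in B_k$ lies in $\widetilde\Omega_k$. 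Take the maximal dyadic cubes $P$ in $\widetilde\Omega_k$. Cauchy--Schwarz on each $P$ gives
$$\sum_{Q\in B_k,\,Q\subseteq P}\omega(Q)|s_Q||t_Q|\le\Big(\sum_{Q\in B_k,Q\subseteq P}\omega(Q)|s_Q|^2\Big)^{1/2}\Big(\sum_{Q\subseteq P}\omega(Q)|t_Q|^2\Big)^{1/2}.$$
The second factor is at most $\|t\|_{\mathcal C}\,\|\chi_P\|_{L^{p(\cdot)}}\omega(P)^{-1/2}$. For the first factor, $\omega(Q)\le2\omega(Q\setminus\Omega_{k+1})$ and $S(s)\le2^{k+1}$ off $\Omega_{k+1}$. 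This gives $\sum_{Q\in B_k,Q\subseteq P}\omega(Q)|s_Q|^2\lesssim2^{2k}\omega(P\cap\widetilde\Omega_k)\le 2^{2k}\omega(P)$. Summing, each maximal $P$ contributes $\lesssim2^k\|t\|_{\mathcal C}\|\chi_P\|_{L^{p(\cdot)}}$, and so
$$\sum_Q\omega(Q)|s_Q||t_Q|\lesssim\|t\|_{\mathcal C}\sum_k\sum_{P\text{ max in }\widetilde\Omega_k}2^k\|\chi_P\|_{L^{p(\cdot)}}.$$

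The main obstacle is bounding the last sum by $\|S(s)\|_{L^{p(\cdot)}}$. With constant $p$ one uses $p\le1$ and $\sum\omega(P)\le\omega(\widetilde\Omega_k)$. In the variable setting $\|\chi_P\|_{L^{p(\cdot)}}$ is not additive, so I would use $p^+\le1$ and the $p^-$-subadditivity of $\|\cdot\|^{p^-}$ in the form
$$\Big(\sum_{k,P}2^k\|\chi_P\|_{L^{p(\cdot)}}\Big)^{p^-}\le\sum_{k,P}2^{kp^-}\|\chi_P\|^{p^-}_{L^{p(\cdot)}}.$$
That alone is not enough, so instead I would aim for the standard variable-exponent estimate (as in Nakai--Sawano / Zhuo--Sawano--Yang)
$$\sum_{k,P}2^k\|\chi_P\|_{L^{p(\cdot)}}\lesssim\Big\|\Big\{\sum_{k,P}\Big(\frac{2^k\chi_P}{\|\chi_P\|_{L^{p(\cdot)}}}\|\chi_P\|_{L^{p(\cdot)}}\Big)^{\underline p}\Big\}^{1/\underline p}\Big\|_{L^{p(\cdot)}},\qquad \underline p<p^-.$$
This holds because $p(\cdot)/\underline p\ge1$ and the norm with exponent at most 1 is quasi-triangle-summable in this form. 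Then $\sum_{P}\chi_P\le\chi_{\widetilde\Omega_k}$ gives the bound $\|(\sum_k2^{k\underline p}\chi_{\widetilde\Omega_k})^{1/\underline p}\|_{L^{p(\cdot)}}$. Since $\chi_{\widetilde\Omega_k}\le 2^{u}(M_D\chi_{\Omega_k})^{u}$, Lemma \ref{FS} applied to the exponent $p(\cdot)/\underline p$ with power $u$ reduces this to $\|(\sum_k2^{k\underline p}\chi_{\Omega_k})^{1/\underline p}\|_{L^{p(\cdot)}}\lesssim\|S(s)\|_{L^{p(\cdot)}}$. Here I take $\underline p<p^-$ so that $p(\cdot)/\underline p\in\mathcal P\cap LH$. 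If this step fails, I would fall back on applying the $p^-$-subadditivity directly with $\|\chi_P\|\le\|\chi_{\widetilde\Omega_k}\|$ via a modular argument.

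Finally I set $s_Q=q_Qh(x_Q)$ and $t_Q=\psi_Qg(x_Q)$ and use $\|h\|_{H_D^{p(\cdot)}}\sim\|f\|_{H_D^{p(\cdot)}}$. This yields $|\langle f,g\rangle|\lesssim\|f\|_{H_D^{p(\cdot)}}\|g\|_{\mathcal C_D^{p(\cdot)}}$.
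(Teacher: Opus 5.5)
Your proposal is correct and is essentially the paper's argument: the same discrete reproducing formula, the same stopping-time sets $\Omega_k$, $B_k$, $\widetilde\Omega_k$, Cauchy--Schwarz over maximal cubes (the paper takes maximal cubes of $B_k$ rather than maximal dyadic cubes of $\widetilde\Omega_k$, which makes no difference), the bound $2^{2k}\omega(P)$ for the $s$-part, and the vector-valued maximal inequality to pass from $\widetilde\Omega_k$ to $\Omega_k$. The step you flagged as the main obstacle is closed as in the paper, but the operative reason is not $p(\cdot)/\underline p\ge 1$; it is the reverse Minkowski inequality $\sum_i\|F_i\|_{L^{p(\cdot)}}\le\|\sum_iF_i\|_{L^{p(\cdot)}}$ for $F_i\ge0$ when $p^+\le1$, which follows from concavity of $t\mapsto t^{p(x)}$. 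You apply it to $F_{k,P}=2^k\chi_P$ and then use $\sum_iF_i\le(\sum_iF_i^{\underline p})^{1/\underline p}$.
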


We first show Proposition \ref{holder}.
\begin{proof}[{\bf The proof of Proposition \ref{holder}}]
    By Theorem \ref{th1.1}, we know that, 
    for $f\in L^2(\mathbb R^n, d\omega)\cap H_D^{p(\cdot)}(\mathbb R^n, d\omega)$,
    we write
    $$f=\sum\limits_{j=-\infty}^\infty\sum\limits_{Q\in {\mathcal Q}^j}w(Q)
    \psi_{Q}(x,x_{Q})q_{Q}h(x_{Q}),$$
    where $\|h\|_{L^2}\sim \|f\|_{L^2}$ and 
    $\|h\|_{H_D^{p(\cdot)}}\sim \|f\|_{H_D^{p(\cdot)}}.$

    Denote that $$\Omega_{\ell}=\left\{x\in \mathbb{R}^{n}: S(h)(x)>2^{\ell}\right\}$$ and
    $$B_{\ell}=\Big\{ Q: Q \ \ \text{are $r$-dyadic cubes}, \ \  \omega\big(Q\cap \Omega_{\ell}\big)>\frac{1}{2}\omega(Q) \ \ \text{and} \ \ \omega\big(Q\cap \Omega_{\ell+1}\big)\leqslant\frac{1}{2}\omega(Q)\Big\}.$$
Also denote that ${{\widetilde \Omega}_\ell }=\{x\in \mathbb{R}^N: M_D(\chi_{\Omega_\ell})(x)>\frac{1}{2}\},$ where $M_D$ is the Hardy--Littlewood maximal function on $\mathbb{R}^n$
    with the measure $d\omega$.

    Denote $B^*_\ell:=\{ {\mathcal Q}^*_\ell\}_\ell$ by the maximal $r-$dyadic cubes in $B_\ell$ for $\ell\in \mathbb{Z}$.
     By the stopping-time construction, We get that
    \begin{eqnarray*}\label{discrete}
    f=
        \sum\limits_{\ell=-\infty}^\infty
        \sum\limits_{{\mathcal Q}^*_\ell\in B^*_\ell}
        \sum\limits_{Q\subseteq {\mathcal Q}^*_\ell \atop Q\in B_\ell}\omega(Q)\psi_{Q}(x,x_{Q})q_{Q}h(x_{Q}).
    \end{eqnarray*}

  Then, by the H\"older inequality we conclude that for $g\in L^2(\mathbb R^n,d\omega),$
     \begin{align*}
    &|\langle f,g\rangle|=
    \left|\sum\limits_{\ell=-\infty}^\infty\sum\limits_{{\mathcal Q}^*_\ell\in B^*_\ell}
    \sum\limits_{Q\subseteq {\mathcal Q}^*_\ell \atop Q\in B_\ell}w(Q)q_Qh(x_Q)\psi_Qg(x_Q)\right|\\
    \lesssim& \sum\limits_{\ell=-\infty}^\infty
    \sum\limits_{{\mathcal Q}^*_\ell\in B^*_\ell}\Big(\sum\limits_{Q\subseteq {\mathcal Q}^*_\ell \atop Q\in B_\ell}w(Q)|q_Qh(x_Q)|^2\Big)^{\frac{1}{2}}
    \Big(\sum\limits_{Q\subseteq {\mathcal Q}^*_\ell \atop Q\in B_\ell}w(Q)|\psi_Qg(x_Q)|^2\Big)^{\frac{1}{2}}\\
   =& \sum\limits_{\ell=-\infty}^\infty
    \sum\limits_{{\mathcal Q}^*_\ell\in B^*_\ell}
    \frac{\|\chi_{\mathcal Q_\ell^\ast}\|_{L^{p(\cdot)}}}{\omega(\mathcal Q_\ell^\ast)^{\frac{1}{2}}}
    \Big(\sum\limits_{Q\subseteq {\mathcal Q}^*_\ell \atop Q\in B_\ell}w(Q)|q_Qh(x_Q)|^2\Big)^{\frac{1}{2}}
    {\omega(\mathcal Q_\ell^\ast)^{\frac{1}{2}}\over\|\chi_{\mathcal Q_\ell^\ast}\|_{L^{p(\cdot)}}}
    \Big(\sum\limits_{Q\subseteq {\mathcal Q}^*_\ell \atop Q\in B_\ell}w(Q)|\psi_Qg(x_Q)|^2\Big)^{\frac{1}{2}}\\  
  \lesssim& \|g\|_{\mathcal C^{p(\cdot)}_D}\sum\limits_{\ell=-\infty}^\infty
    \sum\limits_{{\mathcal Q}^*_\ell\in B^*_\ell}
    \frac{\|\chi_{\mathcal Q_\ell^\ast}\|_{L^{p(\cdot)}}}{\omega(\mathcal Q_\ell^\ast)^{\frac{1}{2}}}
    \Big(\sum\limits_{Q\subseteq {\mathcal Q}^*_\ell \atop Q\in B_\ell}w(Q)|q_Qh(x_Q)|^2\Big)^{\frac{1}{2}}
   \\
    =&: \|g\|_{\mathcal C^{p(\cdot)}_D}\sum\limits_{\ell=-\infty}^\infty
    \sum\limits_{{\mathcal Q}^*_\ell\in B^*_\ell}\lambda_{\ell,\mathcal Q_\ell^\ast}.
    \end{align*}

    Notice that $p^+\le 1$, we obtain that
       \begin{align*}
&\sum\limits_{\ell=-\infty}^\infty
    \sum\limits_{{\mathcal Q}^*_\ell\in B^*_\ell}\lambda_{\ell,\mathcal Q_\ell^\ast}
    =\sum\limits_{\ell=-\infty}^\infty
    \sum\limits_{{\mathcal Q}^*_\ell\in B^*_\ell}\left\|\lambda_{\ell,\mathcal Q_\ell^\ast}\chi_{{\mathcal Q}^*_\ell}\over\|\chi_{{\mathcal Q}^*_\ell}\|_{L^{p(\cdot)}}\right\|_{L^{p(\cdot)}}\\
    &\le \left\|\sum\limits_{\ell=-\infty}^\infty
    \sum\limits_{{\mathcal Q}^*_\ell\in B^*_\ell}{\lambda_{\ell,\mathcal Q_\ell^\ast}\chi_{{\mathcal Q}^*_\ell}\over\|\chi_{{\mathcal Q}^*_\ell}\|_{L^{p(\cdot)}}}\right\|_{L^{p(\cdot)}}\\
        &\le \left\|\left(\sum\limits_{\ell=-\infty}^\infty
    \sum\limits_{{\mathcal Q}^*_\ell\in B^*_\ell}
    \left({\lambda_{\ell,\mathcal Q_\ell^\ast}\chi_{{\mathcal Q}^*_\ell}\over\|\chi_{{\mathcal Q}^*_\ell}\|_{L^{p(\cdot)}}}\right)^{p_-}\right)^{\frac{1}{p_-}}\right\|_{L^{p(\cdot)}}\\
        &= \left\|\left(\sum\limits_{\ell=-\infty}^\infty
    \sum\limits_{{\mathcal Q}^*_\ell\in B^*_\ell}
    \left({  \omega(\mathcal Q_\ell^\ast)^{-\frac{1}{2}}
    \Big(\sum\limits_{Q\subseteq {\mathcal Q}^*_\ell \atop Q\in B_\ell}w(Q)|q_Qh(x_Q)|^2\Big)^{\frac{1}{2}}
    \chi_{{\mathcal Q}^*_\ell}}\right)^{p_-}\right)^{\frac{1}{p_-}}\right\|_{L^{p(\cdot)}}.
    \end{align*}  
    
  Observe that $$\sum\limits_{Q\subseteq {\mathcal Q}^*_\ell \atop Q\in B_\ell}w(Q)|q_Qh(x_Q)|^2\lesssim 2^{2\ell}\omega(\mathcal Q_\ell^\ast).$$
  In fact, first for any $x\in Q$ with $Q\in B_\ell$, we get that
   $$w\big(\big({\widetilde \Omega}_\ell/\Omega_{\ell+1}\big)\cap Q\big)= w(Q)-w(\Omega_{\ell+1}\cap Q) \geqslant \frac{1}{2}w(Q)$$
   and 
   $$
   M_D(\chi_{Q\cap \tilde\Omega_\ell\setminus \Omega_{\ell+1}})(x)>\frac{1}{2}.
   $$
  Then by the classical Fefferman--Stein vector valued inequality,
  we have
  \begin{align*}
 & \sum\limits_{Q\subseteq {\mathcal Q}^*_\ell \atop Q\in B_\ell}w(Q)|q_Qh(x_Q)|^2\\
  &\le \int_{\mathbb R^n}\sum\limits_{Q\subseteq {\mathcal Q}^*_\ell \atop Q\in B_\ell}|q_Qh(x_Q)|^2\chi_Q(x)dx\\
   &\lesssim \int_{\mathbb R^n}\sum\limits_{Q\subseteq {\mathcal Q}^*_\ell \atop Q\in B_\ell}|q_Qh(x_Q)|M_D(\chi_{Q\cap \tilde\Omega_\ell\setminus \Omega_{\ell+1}})(x)|^2dx\\ 
      &\lesssim \int_{\mathbb R^n}\sum\limits_{Q\subseteq {\mathcal Q}^*_\ell \atop Q\in B_\ell}|q_Qh(x_Q)|^2\chi_{Q\cap \tilde\Omega_\ell\setminus \Omega_{\ell+1}}(x)dx\\ 
      &\lesssim \int_{{\mathcal Q_\ell^\ast\cap \tilde\Omega_\ell\setminus \Omega_{\ell+1}}}\sum\limits_{Q\subseteq {\mathcal Q}^*_\ell \atop Q\in B_\ell}|q_Qh(x_Q)|^2\chi_{Q\cap \tilde\Omega_\ell\setminus \Omega_{\ell+1}}(x)dx\\ 
      &\lesssim 2^{2\ell}\omega(\mathcal Q_\ell^\ast)
  \end{align*} 
  
 Therefore, we have
 
        \begin{align*}
\sum\limits_{\ell=-\infty}^\infty
    \sum\limits_{{\mathcal Q}^*_\ell\in B^*_\ell}\lambda_{\ell,\mathcal Q_\ell^\ast}
        &\lesssim \left\|\left(\sum\limits_{\ell=-\infty}^\infty
    \sum\limits_{{\mathcal Q}^*_\ell\in B^*_\ell}
    \left({  \omega(\mathcal Q_\ell^\ast)^{-\frac{1}{2}}
    \Big( 2^{2\ell}\omega(\mathcal Q_\ell^\ast)\Big)^{\frac{1}{2}}
    \chi_{{\mathcal Q}^*_\ell}}\right)^{p_-}\right)^{\frac{1}{p_-}}\right\|_{L^{p(\cdot)}}\\
    &= \left\|\left(\sum\limits_{\ell=-\infty}^\infty
    \sum\limits_{{\mathcal Q}^*_\ell\in B^*_\ell}
    \left({ 
    2^{\ell}
    \chi_{{\mathcal Q}^*_\ell}}\right)^{p_-}\right)^{\frac{1}{p_-}}\right\|_{L^{p(\cdot)}}\\
     &\lesssim \left\|\left(\sum\limits_{\ell=-\infty}^\infty
    \left( 
    2^{\ell}
    \chi_{\widetilde {\Omega_\ell}}\right)^{p_-}\right)^{\frac{1}{p_-}}\right\|_{L^{p(\cdot)}}\\
         &\lesssim \left\|\left(\sum\limits_{\ell=-\infty}^\infty
    \left( 
    2^{\ell}
    \chi_{{\Omega_\ell}}\right)^{p_-}\right)^{\frac{1}{p_-}}\right\|_{L^{p(\cdot)}}\\
         &\lesssim \left\|\left(\sum\limits_{\ell=-\infty}^\infty
    \left( 
    2^{\ell}
    \chi_{{\Omega_\ell}\setminus\Omega_{\ell+1}}\right)^{p_-}\right)^{\frac{1}{p_-}}\right\|_{L^{p(\cdot)}}\\
    &\lesssim \left\|\left(\sum\limits_{\ell=-\infty}^\infty
    \left( 
   S(h)
    \chi_{ {\Omega_\ell}\setminus\Omega_{\ell+1}}\right)^{p_-}\right)^{\frac{1}{p_-}}\right\|_{L^{p(\cdot)}}\\
    &=\|S(h)\|_{L^{p(\cdot)}}=\|h\|_{H_D^{p(\cdot)}}=\|f\|_{H_D^{p(\cdot)}},
    \end{align*} 
    where the second inequality follows from the fact that
    $\bigcup\limits_{{\mathcal Q}^*_\ell\in
        B^*_\ell}{\mathcal Q}^*_\ell \subseteq \widetilde{\Omega}_\ell$,
        the third inequality follows from the classical Feffereman--Stein vector valued inequality
         and the forth inequality follows from the fact that 
         $$
         \sum_{\ell}2^\ell\chi_{\Omega_\ell}(x)=2\sum_{\ell}2^\ell \chi_{\Omega_\ell\setminus\Omega_{\ell+1}}.
         $$
         
Hence, we get
    \begin{align*}\label{5}
    |\langle f,g\rangle|\lesssim& \|g\|_{\mathcal C^{p(\cdot)}_D}\sum\limits_{\ell=-\infty}^\infty
    \sum\limits_{{\mathcal Q}^*_\ell\in B^*_\ell}\lambda_{\ell,\mathcal Q_\ell^\ast}\lesssim
\|g\|_{\mathcal C^{p(\cdot)}_D}\|f\|_{H_D^{p(\cdot)}} .
    \end{align*}
    
    The proof of Proposition \ref{holder} is complete.
    \end{proof}    

Now we state the main result in this section.
\begin{theorem}\label{dual-dunkl}
Suppose that $p(\cdot)\in LH$ with $\frac{N}{N+1}<p^-\le p^+\le 1$. The dual space of $H_D^{p(\cdot)}(\mathbb R^n)$ is $\mathcal C^{p(\cdot)}_D(\mathbb R^n)$ with in the following sense:
    \begin{enumerate}
        \item Let $g\in\mathcal C^{p(\cdot)}_D(\mathbb R^n)$. Then the linear functional 
        \[
        L_{g}\colon f\to L_{g}(f):=\int_{\mathbb R^{n}}f(x)g(x)d\omega(x),
        \]
        initially defined by any $f\in L^2(\mathbb R^n, d\omega)\cap H_D^{p(\cdot)}(\mathbb R^n)$, has a bounded extension to $H_D^{p(\cdot)}(\mathbb R^n)$.
        \item Conversely, for any continuous linear functional $L$ on $H_D^{p(\cdot)}(\mathbb R^n)$, there exists a unique function $g\in \mathcal C^{p(\cdot)}_D(\mathbb R^n)$ and $h:=(T_M^{-1})f$ such that $L(f)=\langle h, g\rangle$ holds for any $f\in H_D^{p(\cdot)}(\mathbb R^n)$ with $\|g\|_{\mathcal C^{p(\cdot)}_D}\lesssim \|L\|$.
    \end{enumerate}
\end{theorem}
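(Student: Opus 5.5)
Part (1) follows from Proposition \ref{holder} together with density. For $f\in L^2(\mathbb R^n,d\omega)\cap H_D^{p(\cdot)}(\mathbb R^n)$ and $g\in L^2\cap\mathcal C^{p(\cdot)}_D$, the proposition gives $|L_g(f)|\lesssim \|f\|_{H_D^{p(\cdot)}}\|g\|_{\mathcal C^{p(\cdot)}_D}$. By Definition \ref{df1.3}, $L^2\cap H_D^{p(\cdot)}$ is dense in $H_D^{p(\cdot)}$. Since $p^+\le 1$, the quasi-norm satisfies $\|f_1+f_2\|^{p^-}\lesssim \|f_1\|^{p^-}+\|f_2\|^{p^-}$, so $L_g$ is uniformly continuous on a dense subspace. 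It therefore extends uniquely to all of $H_D^{p(\cdot)}$ with the same bound. For general $g$ in the completion $\mathcal C^{p(\cdot)}_D$, I would approximate $g$ by $L^2$ elements $g_k$; the functionals $L_{g_k}$ form a Cauchy sequence in operator norm, and their limit defines $L_g$.

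For part (2) I would use the Frazier--Jawerth sequence-space method. Define the sequence space $s^{p(\cdot)}$ of $\{s_Q\}$, indexed by $r$-dyadic cubes, with
\[
\|s\|_{s^{p(\cdot)}}=\Big\|\Big(\sum_Q|s_Q|^2\chi_Q\Big)^{1/2}\Big\|_{L^{p(\cdot)}}.
\]
Also define $c^{p(\cdot)}$ by
\[
\|t\|_{c^{p(\cdot)}}=\sup_P\Big(\frac{\omega(P)}{\|\chi_P\|^2_{L^{p(\cdot)}}}\sum_{Q\subseteq P}\omega(Q)|t_Q|^2\Big)^{1/2}.
\]
\textbf{Step 1} is to show $(s^{p(\cdot)})^*=c^{p(\cdot)}$ under the pairing $\sum_Q\omega(Q)s_Qt_Q$. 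The inequality $|\sum_Q\omega(Q)s_Qt_Q|\lesssim\|s\|_{s^{p(\cdot)}}\|t\|_{c^{p(\cdot)}}$ is exactly the stopping-time argument in the proof of Proposition \ref{holder}, run with $s_Q$ in place of $q_Qh(x_Q)$. For the converse, let $\ell$ be a functional on $s^{p(\cdot)}$ and set $t_Q=\ell(e_Q)/\omega(Q)$, where $e_Q$ is the unit sequence at $Q$. To bound $\|t\|_{c^{p(\cdot)}}$, fix a cube $P$ and test $\ell$ on the finite sequence $s_Q=\overline{t_Q}\chi_{\{Q\subseteq P\}}/A$ with $A=(\sum_{Q\subseteq P}\omega(Q)|t_Q|^2)^{1/2}$. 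This gives $A\le\|\ell\|\,\|s\|_{s^{p(\cdot)}}$. The function $\sum_{Q\subseteq P}|s_Q|^2\chi_Q$ is supported in $P$, so Hölder-type estimates in $L^{p(\cdot)}$ with $p^+\le1$ (comparing with its $L^2$ norm via $\|\cdot\|_{L^{p(\cdot)}(P)}\lesssim \|\chi_P\|_{L^{p(\cdot)}}\omega(P)^{-1/2}\|\cdot\|_{L^2(P)}$) yield $\|s\|_{s^{p(\cdot)}}\lesssim\|\chi_P\|_{L^{p(\cdot)}}\omega(P)^{-1/2}$. Justifying this last inequality in the variable setting is the \textbf{main obstacle}. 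I would first try a weak-type decomposition of the square function into level sets, using that $\|\chi_E\|_{L^{p(\cdot)}}$ is comparable to $\omega(E)^{1/p_E}$ for $LH$ exponents, together with the Fefferman--Stein inequality (Lemma \ref{FS}) applied to $p(\cdot)/p_-$ (or to $2/p_-$ with an auxiliary $q>1$). Density of finite sequences, which holds because $p^+<\infty$, then completes Step 1.

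\textbf{Step 2} transfers this to functions. Define $S_q:f\mapsto\{q_Qh(x_Q)\}$ with $h=T_M^{-1}f$, and $T_\psi:\{s_Q\}\mapsto\sum_Q\omega(Q)s_Q\psi_Q(\cdot,x_Q)$. By Theorem \ref{th1.1}, $T_\psi\circ S_q=\mathrm{Id}$ and $\|S_qf\|_{s^{p(\cdot)}}\sim\|f\|_{H_D^{p(\cdot)}}$. Boundedness of $T_\psi:s^{p(\cdot)}\to H_D^{p(\cdot)}$ follows from almost-orthogonality estimates for $q_{Q'}\psi_Q$ combined with Lemma \ref{FS}, exactly as in \cite{Tan26-1}. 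Given $L\in(H_D^{p(\cdot)})^*$, the composition $L\circ T_\psi$ lies in $(s^{p(\cdot)})^*$. By Step 1 it is represented by some $t\in c^{p(\cdot)}$ with $\|t\|_{c^{p(\cdot)}}\lesssim\|L\|$. Then for each $f$,
\[
L(f)=L(T_\psi S_qf)=\sum_Q\omega(Q)\,q_Qh(x_Q)\,t_Q .
\]
Put $g=\sum_Q\omega(Q)t_Q\,\widetilde\psi_Q$ for the appropriate kernels, so that this pairing becomes $\langle h,g\rangle$. The bound $\|g\|_{\mathcal C^{p(\cdot)}_D}\lesssim\|t\|_{c^{p(\cdot)}}$ comes from the same almost-orthogonality estimate: $\psi_{Q'}g(x_{Q'})$ is a sum of $t_Q$ with decaying coefficients, and the Carleson bound is preserved, as in the Frazier--Jawerth argument.

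Uniqueness follows from part (1): if two such $g$ give the same $L$, their difference pairs to zero against every $h$. Taking $h$ equal to the wavelet functions $\psi_Q(\cdot,x_Q)$ forces all the coefficients $\psi_Q(g_1-g_2)(x_Q)$ to vanish, so $\|g_1-g_2\|_{\mathcal C^{p(\cdot)}_D}=0$.
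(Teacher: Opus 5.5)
Your proposal follows essentially the same route as the paper: part (1) by Proposition~\ref{holder} plus density, and part (2) by the Frazier--Jawerth sequence spaces $s^{p(\cdot)}$, $c^{p(\cdot)}$, testing the functional on sequences supported in a fixed cube $P$, and transferring through $f\mapsto\{q_Q(T_M^{-1}f)(x_Q)\}_Q$ and $\{s_Q\}_Q\mapsto\sum_Q\omega(Q)s_Q\psi_Q$ with $g=\sum_Q\omega(Q)t_Q\,q_Q(x_Q,\cdot)$. You are in places more careful than the paper (defining $t_Q=\ell(e_Q)/\omega(Q)$, using density of finite sequences, proving uniqueness), although $\{Q\subseteq P\}$ is an infinite family, so your test sequence must be truncated before applying $\ell$ and a limit taken afterwards. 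The step you single out as the main obstacle needs no level-set argument: the paper gets it directly from the generalized H\"older inequality $\|F\|_{L^{p(\cdot)}}\lesssim\|F\|_{L^1}\|\chi_P\|_{L^{r(\cdot)}}$ with $1/p(\cdot)=1+1/r(\cdot)$, followed by Cauchy--Schwarz on $P$, so the only variable-exponent input is the cube estimate $\|\chi_P\|_{L^{r(\cdot)}}\lesssim\|\chi_P\|_{L^{p(\cdot)}}\,\omega(P)^{-1}$, which the paper uses implicitly as an $LH$ property (by contrast, a two-sided comparability $\|\chi_E\|_{L^{p(\cdot)}}\sim\omega(E)^{1/p_E}$ for arbitrary level sets $E$ is not something you should rely on).
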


\begin{proof}
First we show $(1)$. By the definition of $\mathcal C^{p(\cdot)}_D(\mathbb R^n)$, for any $g\in \mathcal C^{p(\cdot)}_D(\mathbb R^n)$,  there exists a sequence $g_n\in L^2(\mathbb R^n, d\omega)\cap \mathcal C^{p(\cdot)}_D(\mathbb R^n)$ with 
$\|g_n\|_{\mathcal C^{p(\cdot)}_D}\lesssim \|g\|_{\mathcal C^{p(\cdot)}_D}$ 
such that $\big<f, g\big>=\lim_{n\rightarrow\infty}\big<f, g_n\big>$, where $f\in L^2(\mathbb R^n, d\omega)\cap H_D^{p(\cdot)}(\mathbb R^n)$.
Applying Proposition \ref{holder}, we deduce that for $f\in L^2(\mathbb R^n, d\omega)\cap H_D^{p(\cdot)}(\mathbb R^n)$,
$$
|\big<f, g_n\big>|\lesssim \|f\|_{H_D^{p(\cdot)}}\|g_n\|_{\mathcal C^{p(\cdot)}_D}\lesssim  \|f\|_{H_D^{p(\cdot)}}\|g\|_{\mathcal C^{p(\cdot)}_D}.
$$
From the fact that $f\in L^2(\mathbb R^n, d\omega)\cap H_D^{p(\cdot)}(\mathbb R^n)$ is dense in $H_D^{p(\cdot)}(\mathbb R^n)$, 
we conclude that 
$g\in C_D^{p(\cdot)}(\mathbb R^n)$ can be extended to a continuous linear functional on $H_D^{p(\cdot)}(\mathbb R^n)$
 and $L_g:=\big<f, g\big>$ with
$$
\|L_g\|\lesssim \|g\|_{C_D^{p(\cdot)}},
$$
that is, 
$\mathcal C_D^{p(\cdot)}(\mathbb R^n)\subset (H_D^{p(\cdot)}(\mathbb R^n))'$.

To show $(2)$, we introduce the sequence spaces as follows.
For $p(\cdot)\in LH$, $0<p^-\le p^+\leq1$, the sequence space
$s^{p(\cdot)}$ consists all complex-value sequences
$$
s^{p(\cdot)}=\left\{
\{s_Q\}_Q:\|s_Q\|_{s^{p(\cdot)}}:=
\left\|\bigg\{\sum_Q
|s_Q|^2\chi_Q\bigg\}^{1/2}\right\|_{L^{p(\cdot)}}<\infty
\right\};
$$
The sequence space $c^{p(\cdot)}$ consists all complex-value sequences
$$
c^{p(\cdot)}=\left\{
\{t_Q\}_Q:\|t_Q\|_{c^{p(\cdot)}}:=
\sup_{P}\left\{
\frac{\omega(P)}{\|\chi_P\|^2_{L^{p(\cdot)}}}
\sum_{Q\subset P}\omega(Q)|t_Q|^2
\right\}^{1/2}
<\infty
\right\}.
$$

We now show that every continuous linear functional $l$ on $s^{p(\cdot)}$
satisfies $l=l_t$ for some $t_Q\in c^{p(\cdot)}$ with $\|t_Q\|_{c^{p(\cdot)}}
\le C\|l\|$.
For $s=\{s_Q\}_Q\in s^{p(\cdot)}$,
let $l(s)=\sum_{Q}\omega(Q)s_Qt_Q$.
Fix a dyadic cube $P$.
Let $X$ be the sequence space consisting of
$s=\{s_Q\}_{Q\subset P}$, and define a counting measure on dyadic cubes
$Q\subset P$ by
$d\sigma(Q)=\frac{\omega(Q)}{\omega(P)^{-1}\|\chi_P\|^2_{L^{p(\cdot)}}}$.

Then
\begin{align*}
&\left(\frac{\omega(P)}{\|\chi_P\|^2_{L^{p(\cdot)}}}
\sum_{Q\subset P}\omega(Q)|t_Q|^2\right)^{1/2}\\
&=\left\|\left\{t_Q\right\}_{Q\subset P}\right\|_{l^2(X,d\sigma)}\\
&=\sup_{\|s\|_{l^2(X,d\sigma)}\le 1}
\left|\frac{\omega(P)}{\|\chi_P\|^2_{L^{p(\cdot)}}}
\sum_{Q\subset P}\omega(Q)s_Qt_Q\right|\\
&=\sup_{\|s\|_{l^2(X,d\sigma)}\le 1}
\left|\ell\left(\frac{\omega(P)}{\|\chi_P\|^2_{L^{p(\cdot)}}}
s_Q\right)\right|\\
&=\|\ell\|\sup_{\|s\|_{l^2(X,d\sigma)}\le 1}
\left\|\left\{\frac{\omega(P)s_Q}{\|\chi_P\|^2_{L^{p(\cdot)}}}
\right\}_{Q\subset P}\right\|_{s^{p(\cdot)}}.
\end{align*}

For any $x\in \mathbb R^n$, choose that $0<r(x)<\infty$ such that
$\frac{1}{p(x)}=1+\frac{1}{r(x)}$.
By the generalized H\"older inequality in the variable setting, we have

\begin{align*}
&\left\|\left\{\frac{\omega(P)s_Q}{\|\chi_P\|^2_{L^{p(\cdot)}}}
\right\}_{Q\subset P}\right\|_{s^{p(\cdot)}}\\
&=
\left\|\left\{\sum_{Q\subset P}\frac{\omega(P)^2|s_Q|^2}
{\|\chi_P\|^4_{L^{p(\cdot)}}}\chi_Q
\right\}^{1/2}\right\|_{L^{p(\cdot)}}\\
&\le C
\left\|\left\{\sum_{Q\subset P}\frac{\omega(P)^2|s_Q|^2}
{\|\chi_P\|^4_{L^{p(\cdot)}}}\chi_Q
\right\}^{1/2}\right\|_{L^1}\|\chi_P\|_{L^{r(\cdot)}}\\
&=C\omega(P)\left\{\frac{1}{\omega(P)}\int_P
\bigg(\sum_{Q\subset P}\frac{|s_Q|^2}
{(\|\chi_P\|^2_{L^{p(\cdot)}}\omega(P)^{-1})^2}
\chi_Q(x)\bigg)^{1/2}d\omega(x)\right\}\|\chi_P\|_{L^{p(\cdot)}}\omega(P)^{-1}\\
&\le C\left\{\frac{1}{\omega(P)}\int_P
\bigg(\sum_{Q\subset P}\frac{|s_Q|^2}
{(\|\chi_P\|^2_{L^{p(\cdot)}}\omega(P)^{-1})^2}
\chi_Q(x)\bigg)d\omega(x)\right\}^{1/2}\|\chi_P\|_{L^{p(\cdot)}}\\
&\le C\left\{\int_P
\bigg(\sum_{Q\subset P}\frac{|s_Q|^2}
{\|\chi_P\|^2_{L^{p(\cdot)}}\omega(P)^{-1}}
\chi_Q(x)\bigg)d\omega(x)\right\}^{1/2}\\
&\le C
\bigg(\sum_{Q\subset P}\frac{|s_Q|^2\omega(Q)}
{\|\chi_P\|^2_{L^{p(\cdot)}}|P|^{-1}}
\bigg)^{1/2}\\
&=\|s\|_{l^2(X,d\sigma)}.
\end{align*}

Thus,
\begin{align*}
\|t_Q\|_{c^{p(\cdot)}}=\left(\frac{\omega(P)}{\|\chi_P\|^2_{L^{p(\cdot)}}}
\sum_{Q\subset P}|t_Q|^2\omega(Q)\right)^{1/2}
&\le\|\ell\|.
\end{align*}

Define a linear map $\mathcal L$ by
$$
\mathcal L (f)=\{q_Q(T_M)^{-1}f(x_Q)\}_Q,
$$
and another linear map $\mathcal P$ by
$$
\mathcal P(\{s_Q\}_Q)=\sum_{Q}\omega(Q)s_Q\psi_Q.
$$
By Theorem \ref{th1.1}, we find that $\mathcal P\circ \mathcal L$ is  the identity on $L^2(\mathbb R^n, d\omega)$.
Then let $L\in(H_D^{p(\cdot)}(\mathbb R^n))'$ and define $\ell=L\circ \mathcal P$ on $s^{p(\cdot)}$.
Observe that $$
\|\mathcal P(\{s_Q\})\|_{H_D^{p(\cdot)}}\lesssim \|S(\mathcal P(\{s_Q\}))\|_{L^{p(\cdot)}}
\lesssim \|\{s_Q\}\|_{s_{p(\cdot)}}.
$$

Hence, we conclude that $\ell\in (s^{p(\cdot)})'$.
Thus, there exists $t=\{t_Q\}_Q\in c^{p(\cdot)}$ such that
$$
\ell(\{s_Q\}_Q)=\sum_Q\omega(Q)s_Qt_Q
$$
and
$\|t\|_{c^{p(\cdot)}}\lesssim\|\ell\|\lesssim \|L\|$.

For $f\in H_D^{p(\cdot)}(\mathbb R^n)$, we deduce that
\begin{align*}
L(f)&=L(\sum_j\sum_Q\omega(Q)q_Q(T_M^{-1})f(x_Q)\psi_Q(x,x_Q))\\
&=L\circ \mathcal P\circ \mathcal L(\sum_j\sum_Q\omega(Q)q_Q(T_M^{-1})f(x_Q)\psi_Q(x,x_Q))\\
&=\ell(\{q_Q(T_M^{-1})f(x_Q)\})\\
&=\sum_j\sum_Q \omega(Q)q_Q(T_M^{-1})f(x_Q)t_Q\\
&=\sum_j\sum_Q \omega(Q)\big<q_Q(\cdot, x_Q), (T_M^{-1})f(\cdot)\big>t_Q\\
&=\big<(T_M^{-1})f(\cdot), \sum_j\sum_Q \omega(Q)q_Q(\cdot, x_Q)t_Q\big>\\
&=:\big<(h,g\big>,
\end{align*}
where $g(x):= \sum_j\sum_Q \omega(Q)q_Q(x, x_Q)t_Q$
and $h:=(T_M^{-1})f$ with $\|h\|_{H_D^{p(\cdot)}}\sim \|f\|_{H_D^{p(\cdot)}}$.

Therefore,
$$
\|g\|_{\mathcal C_D^{p(\cdot)}}\lesssim \|t_Q\|_{c^{p(\cdot)}}
\le C\|L\|
$$
and the proof is complete.

\end{proof}

\section{{$T1$ Theorem for variable Dunkl--Hardy and Dunkl--Carleson spaces}}

In this section, 
we will obtain the general $T1$ theorem for variable Dunkl--Hardy spaces and 
variable Dunkl--Carleson spaces by using almost orthogonality estimates together with a weak density argument.

Before we state the main results in this section, we recall the definition and some boundedness results of Dunkl--Calder\'on--Zygmund operator.
  Let ${\dot{C}}^\eta(\Bbb
  R^n)$ be the H\"older space of continuous functions $f$ with 
  $$\|f\|_{{\dot{C}}^\eta}:=\sup\limits_{x\ne y} \frac{|f(x)-f(y)|}{\|x-y\|^{\eta}}<\infty. $$
  We denote ${\dot{C}}^\eta_0(\Bbb R^n)$ by the H\"older space ${\dot{C}}^\eta(\Bbb R^n)$ with compact supports.   
Now we recall the following definition of Dunkl--Calder\'on--Zygmund operator which is introduced in \cite{THHLL1}.
\begin{definition}\label{df1.12}
        An operator $T: C_0^\eta(\mathbb{R}^n)\rightarrow(C_0^\eta(\mathbb{R}^n))'$ for some $\eta>0,$ is said to be a Dunkl--Calder\'on--Zygmund singular integral operator if $K(x,y),$ the kernel of $T$,
        satisfies the following conditions: there exists some $0<\varepsilon\leqslant 1$ such that
    \begin{equation}\label{si}
    |K(x,y)|\lesssim \frac1{\omega(B(x,d(x,y)))}\Big(\frac{d(x,y)}{\|x-y\|}\Big)^\varepsilon
    \end{equation}
    for all ${  d(x,y)\not=0};$
    \begin{equation}\label{smooth y3}
    |K(x,y)-K(x,y')|\lesssim \Big(\frac{\|y-y'\|}{\|x-y\|}\Big)^\varepsilon\frac{1}{\omega(B(x,d(x,y)))}
    \end{equation}
    for $\|y-y'\|\leqslant  d(x,y)/2;$
    \begin{equation}\label{smooth x3}
    |K(x',y)-K(x,y)|\lesssim \Big(\frac{\|x-x'\|}{\|x-y\|}\Big)^\varepsilon\frac1{\omega(B(x,d(x,y)))}
    \end{equation}
    for $\|x-x'\|\leqslant  d(x,y)/2.$

    Moreover,
    $$\langle T(f),g\rangle=\int_{\R^N}\int_{\R^n} K(x,y)f(x)g(y)d\omega(x)d\omega(y)$$
    for all $f$ and $g$ in $C_0^\eta(\mathbb{R}^n)$ with $\supp f\cap \supp g=\emptyset.$

    A Dunkl--Calder\'on--Zygmund singular integral operator is said to be the Dunkl--Calder\'on--Zygmund operator if it extends a bounded operator on $L^2(\mathbb R^n,\omega).$
\end{definition}

The boundedness of the Dunkl-Calder\'on-Zygmund operators on the
variable Dunkl--Lebesgue spaces is given in \cite{TT}. 
  \begin{theorem}\label{bdlp}
  	Suppose that $T$ is a Dunkl--Calder\'on--Zygmund operator and  $p(\cdot)\in  \mathcal P\cap LH$  with $\frac{{N}}{{N}+\epsilon}<p^-\le p^+<\infty$. Then $T$ extends to a bounded operator on the variable Dunkl--Lebesgue space $L^{p(\cdot)}(\mathbb R^n, dw)$. 
Moreover, there is a constant $C$ such that
  	$$\|T(f)\|_{L^{p(\cdot)}(\mathbb R^n, dw)} \leqslant C\|f\|_{L^{p(\cdot)}(\mathbb R^n, dw)}.$$ 	
  \end{theorem}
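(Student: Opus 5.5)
The plan is a pointwise sharp-function estimate for $T$ followed by a variable-exponent Fefferman--Stein argument. The Dunkl orbit structure enters only through the operator $\widetilde M_D$.

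First, note that since $p(\cdot)\in\mathcal P$ we have $p^->1$, so the restriction $\frac{N}{N+\epsilon}<p^-$ is automatic. We therefore work with $1<p^-\le p^+<\infty$ and $p(\cdot)\in LH$. By density it suffices to prove the norm inequality for $f\in L^2(\mathbb R^n,d\omega)\cap L^{p(\cdot)}(\mathbb R^n,d\omega)$ with bounded support, where $Tf$ is given by the $L^2$ extension. Fix $0<\delta<1$ and set
\[
M^{\#}_\delta g(x)=\sup_{B\ni x}\inf_{c}\Big(\frac1{\omega(B)}\int_B \big||g|^\delta-|c|^\delta\big|\,d\omega\Big)^{1/\delta},
\]
with $B$ running over Euclidean balls.

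\textbf{Step 1 (pointwise estimate).} Fix a ball $B=B(x_B,r)\ni x$ and let $\mathcal O(2B):=\bigcup_{\sigma\in G}\sigma(2B)$. Split $f=f_1+f_2$ with $f_1=f\chi_{\mathcal O(2B)}$.
\begin{itemize}
\item For the local part $f_1$, use the weak $(1,1)$ bound of $T$, which follows from $L^2$ boundedness and the Calder\'on--Zygmund decomposition on the space of homogeneous type $(\mathbb R^n,\|\cdot\|,d\omega)$. Kolmogorov's inequality then gives
\[
\Big(\frac1{\omega(B)}\int_B|Tf_1|^\delta d\omega\Big)^{1/\delta}\lesssim \frac1{\omega(B)}\int_{\mathcal O(2B)}|f|\,d\omega\lesssim \sum_{\sigma\in G}M_Df(\sigma x)=\widetilde M_Df(x),
\]
where we used the $G$-invariance of $\omega$ and $\omega(\sigma(2B))=\omega(2B)\sim\omega(B)$.
\item For the far part $f_2$, take $c=Tf_2(x_B)$. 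If $y\notin\mathcal O(2B)$ then $d(x_B,y)\ge 2r$, so $\|z-x_B\|\le r\le d(x_B,y)/2$ for every $z\in B$. The smoothness condition \eqref{smooth x3} then applies, with the factor $\big(\|z-x_B\|/\|x_B-y\|\big)^\varepsilon\le \big(r/d(x_B,y)\big)^\varepsilon$. Decompose into dyadic Dunkl annuli $2^{k}r\le d(x_B,y)<2^{k+1}r$. Each annulus is contained in $\mathcal O(2^{k+1}B)$ and has $\omega(B(x_B,d(x_B,y)))\sim\omega(2^kB)$. Summing the geometric factors $2^{-k\varepsilon}$ yields $|Tf_2(z)-Tf_2(x_B)|\lesssim \widetilde M_Df(x)$.
\end{itemize}
Combining the two parts gives $M^{\#}_\delta(Tf)(x)\lesssim \widetilde M_Df(x)$.

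\textbf{Step 2 (from $M^\#_\delta$ to the norm).} Invoke the Fefferman--Stein inequality $\|g\|_{L^{p(\cdot)}}\lesssim\|M_{D,\delta}g\|_{L^{p(\cdot)}}\lesssim \|M^{\#}_\delta g\|_{L^{p(\cdot)}}$ on spaces of homogeneous type with $p(\cdot)\in LH$, where $M_{D,\delta}g:=(M_D|g|^\delta)^{1/\delta}$. This inequality can be obtained by extrapolation from the weighted $A_\infty$ version, using that $M_D$ is bounded on $L^{(p/\delta)'(\cdot)}$ by the result of \cite{K}. Apply it with $g=Tf$ to get
\[
\|Tf\|_{L^{p(\cdot)}}\lesssim \|\widetilde M_Df\|_{L^{p(\cdot)}}\le\sum_{\sigma\in G}\|M_Df(\sigma\,\cdot)\|_{L^{p(\cdot)}}=|G|\,\|M_Df\|_{L^{p(\cdot)}}\lesssim\|f\|_{L^{p(\cdot)}}.
\]
Here the equality of norms under $x\mapsto\sigma x$ uses that both $p(\cdot)$ and $d\omega$ are $G$-invariant. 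The last inequality is the boundedness of $M_D$ on $L^{p(\cdot)}$ from \cite{AHH2015}.

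\textbf{Main obstacle.} The delicate point is the a priori finiteness required in Step 2: the Fefferman--Stein inequality needs $\|M_{D,\delta}(Tf)\|_{L^{p(\cdot)}}<\infty$ in advance. I would first try to avoid the issue by using the Rubio de Francia extrapolation in the variable setting. Step 1 gives the weighted inequality $\|Tf\|_{L^{p_0}(v)}\lesssim\|\widetilde M_Df\|_{L^{p_0}(v)}$ for $v\in A_\infty(d\omega)$, where the a priori condition holds for bounded, compactly supported $f$ when $v$ is restricted suitably. Extrapolation then transfers this to $L^{p(\cdot)}$ using only the boundedness of $M_D$ on $L^{p'(\cdot)}$. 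As a fallback, truncate: replace $Tf$ by $\min(|Tf|,k)\chi_{B(0,k)}$, which is in $L^{p(\cdot)}$, and pass $k\to\infty$ by monotone convergence.
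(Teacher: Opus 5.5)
The paper gives no proof of this statement. It is quoted from \cite{TT}, so there is no internal argument to compare against, and your proof has to stand on its own. It largely does, by a genuinely self-contained route. The pointwise bound $M^{\#}_\delta(Tf)\lesssim\widetilde M_Df$ is correct. Removing the whole orbit $\mathcal O(2B)$ is exactly what forces $d(x_B,y)\ge 2r$, so the $x$-smoothness condition applies with $\|z-x_B\|<r\le d(x_B,y)/2$. Each Dunkl annulus lies in $\bigcup_{\sigma\in G}B(\sigma x_B,2^{k+1}r)$, and each of these balls is controlled by $M_Df(\sigma x)$ via $G$-invariance of $\omega$ and doubling. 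Passing from $\widetilde M_D$ to $M_D$ through $\|M_Df(\sigma\,\cdot)\|_{L^{p(\cdot)}}=\|M_Df\|_{L^{p(\cdot)}}$ is the right use of the paper's standing assumption that $p(\cdot)$ is $G$-invariant. You are also right that $\frac N{N+\epsilon}<p^-$ is vacuous in $\mathcal P$. Compared with the bare citation, your argument shows exactly which ingredients are needed: weak $(1,1)$, the $x$-regularity of the kernel, a variable Fefferman--Stein inequality on $(\mathbb R^n,\|\cdot\|,d\omega)$, and $G$-invariance of $p(\cdot)$.

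The one loose end is the a priori finiteness you flag, and neither of your proposed fixes is the right one. Restricting the weights only relocates the hypothesis. For Rubio de Francia weights $w=\mathcal Rh$, finiteness of $\int(M_{D,\delta}Tf)^{p_0}w\,d\omega$ comes from already knowing $M_{D,\delta}(Tf)\in L^{p(\cdot)}$. The truncation $\min(|Tf|,k)\chi_{B(0,k)}$ fails as stated, because the spatial cut-off is not controlled by $M^{\#}_\delta(Tf)$; the jump across $\partial B(0,k)$ would have to be estimated separately.

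The clean fix is unweighted. Take $f$ bounded with bounded support; this class is dense in $L^{p(\cdot)}$ since $p^+<\infty$. Your class $L^2\cap L^{p(\cdot)}$ with bounded support does not guarantee $f\in L^{p^+}$ when $p^+>2$. Next, $T$ is bounded on $L^q(d\omega)$ for every $1<q<\infty$. For $q\le 2$ this follows from weak $(1,1)$, $L^2$ boundedness and Marcinkiewicz. For $q\ge2$ use duality: $T^*$ is again a Dunkl--Calder\'on--Zygmund operator, because the kernel conditions are symmetric up to $\omega(B(x,d(x,y)))\sim\omega(B(y,d(x,y)))$. Hence $Tf\in L^{p^-}\cap L^{p^+}\subset L^{p(\cdot)}$. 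Since $M_D$ is bounded on $L^{p(\cdot)/\delta}$, it follows that $M_{D,\delta}(Tf)=(M_D|Tf|^\delta)^{1/\delta}\in L^{p(\cdot)}$. That is precisely the hypothesis the variable Fefferman--Stein inequality needs, and the rest of your argument then goes through unchanged.
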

  
Furthermore, \cite{Tan26-1} gives the boundedness of the Dunkl--Calder\'on--Zygmund operators on the
variable Dunkl--Hardy spaces.
   \begin{theorem}\label{bdhp}
  	Suppose that $T$ is a Dunkl--Calder\'on--Zygmund operator and  $p(\cdot)\in\mathcal P^0\cap LH$ with $\frac{{N}}{{N}+\epsilon}<p^-\le p^+<\infty$. 
Then $T$ extends to a bounded operator on the variable Dunkl--Hardy space $H_D^{p(\cdot)}(\mathbb R^n)$ to $L^{p(\cdot)}(\mathbb R^n,dw)$. 
Moreover, there is a constant $C$ such that
  	$$\|T(f)\|_{L^{p(\cdot)}(\mathbb R^n,dw)} \leqslant C\|f\|_{H_D^{p(\cdot)}(\mathbb R^n)}.$$ 	
  \end{theorem}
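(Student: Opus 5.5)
The statement is Theorem \ref{bdhp}, cited from \cite{Tan26-1}. To prove it I would go through the atomic/molecular route on the space of homogeneous type $(\mathbb R^n,\|\cdot\|,d\omega)$, using the Littlewood--Paley machinery above.

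\textbf{Step 1: reduction to a dense class.} By Theorem \ref{relation}, $\|f\|_{H_D^{p(\cdot)}}\sim\|f\|_{H^{p(\cdot)}_{cw}}$, so it suffices to prove the estimate for $f\in L^2(\mathbb R^n,d\omega)\cap H_D^{p(\cdot)}(\mathbb R^n)$. Density then gives the extension.

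\textbf{Step 2: atomic decomposition.} For such $f$, use the discrete reproducing formula of Theorem \ref{th1.1} together with the stopping-time sets $\Omega_\ell$, $B_\ell$ and the maximal cubes $\mathcal Q^*_\ell$, exactly as in the proof of Proposition \ref{holder}. This writes
\[
f=\sum_{\ell}\sum_{\mathcal Q^*_\ell}\lambda_{\ell,\mathcal Q^*_\ell}\,a_{\ell,\mathcal Q^*_\ell},\qquad
\lambda_{\ell,\mathcal Q^*_\ell}\sim 2^\ell\|\chi_{\mathcal Q^*_\ell}\|_{L^{p(\cdot)}} .
\]
Each $a=a_{\ell,\mathcal Q^*_\ell}$ has the following properties:
\begin{itemize}
\item it is essentially supported near a dilate of $\mathcal Q^*_\ell$ (up to the rapidly decaying tails of $\psi_Q$, so it is really a molecule);
\item it has vanishing integral;
\item it satisfies $\|a\|_{L^2}\lesssim \omega(\mathcal Q^*_\ell)^{1/2}\|\chi_{\mathcal Q^*_\ell}\|^{-1}_{L^{p(\cdot)}}$, which follows from the $L^2$ bound $2^{2\ell}\omega(\mathcal Q^*_\ell)$ already derived in Proposition \ref{holder}.
\end{itemize}
The norm control is the variable-exponent one:
\[
\Big\|\Big(\sum(\lambda\chi_{\mathcal Q^*}/\|\chi_{\mathcal Q^*}\|_{L^{p(\cdot)}})^{p_-}\Big)^{1/p_-}\Big\|_{L^{p(\cdot)}}\lesssim\|f\|_{H_D^{p(\cdot)}}.
\]
This is proved by the same chain of inequalities as in Proposition \ref{holder}, where Lemma \ref{FS} is applied with $p(\cdot)/p_-$.

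\textbf{Step 3: pointwise estimate of $T$ on one molecule.} Fix a molecule $a$ attached to a cube $Q$ with center $x_Q$ and side $\ell(Q)$, and let $\widetilde Q$ be a fixed dilate of $Q$.
\begin{itemize}
\item On $\bigcup_{\sigma\in G}\sigma(\widetilde Q)$ I would use $L^2$-boundedness of $T$ and Hölder's inequality.
\item Away from the $G$-orbit of $\widetilde Q$ I would use cancellation of $a$ and the smoothness (\ref{smooth y3}), giving
\[
|Ta(x)|\lesssim \|\chi_Q\|^{-1}_{L^{p(\cdot)}}\Big(\frac{\ell(Q)}{\|x-x_Q\|}\Big)^{\varepsilon}\frac{\omega(Q)}{\omega(B(x_Q,d(x,x_Q)))}.
\]
\end{itemize}
This is the main obstacle. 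The size condition (\ref{si}) involves $d(x,y)$, while smoothness involves $\|x-y\|$. In the region where $d(x,x_Q)$ is small but $\|x-x_Q\|$ is large, the decay is only in $\|\cdot\|$ and not in $d$. I would handle this by splitting over $\sigma\in G$ and using the inequality $\omega(B(x,d))\gtrsim$ the measure of the reflected ball. The right-hand side is then dominated by
\[
\|\chi_Q\|^{-1}_{L^{p(\cdot)}}\sum_{\sigma\in G}\big(M_D\chi_{Q}(\sigma x)\big)^{(N+\varepsilon)/N},
\]
up to a harmless mixture of the two metrics, which is exactly the role of $\widetilde M_D$.

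\textbf{Step 4: summation.} Since $p_->N/(N+\varepsilon)$, set $u=(N+\varepsilon)/N$, so that $up_->1$. Then apply Lemma \ref{FS} in $L^{u p(\cdot)}$; $G$-invariance of $p(\cdot)$ makes the reflections harmless. For the local parts I would use the standard $L^2$-atom lemma for variable exponents: if $\operatorname{supp} b_j\subset \widetilde Q_j$ and $\|b_j\|_{L^2}\lesssim \omega(Q_j)^{1/2}$, then
\[
\Big\|\sum\lambda_j b_j\Big\|_{L^{p(\cdot)}}\lesssim\Big\|\sum\lambda_j\chi_{Q_j}\Big\|_{L^{p(\cdot)}},
\]
which is proved by duality with $M_D$ bounded on $L^{(2/p_-)'}$-type spaces. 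Combining the local and nonlocal parts, together with $p_-\le1$ subadditivity, yields $\|Tf\|_{L^{p(\cdot)}}\lesssim\|f\|_{H_D^{p(\cdot)}}$.
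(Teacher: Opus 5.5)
The paper does not prove this statement; it is quoted from \cite{Tan26-1}, so your argument has to stand on its own. Steps 3--4 are the standard atomic argument and are fine \emph{for atoms supported in a single Euclidean ball with vanishing integral}. The gap is that Step 2 does not produce such atoms.

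The building blocks $\psi_Q(\cdot,x_Q)$ of Theorem \ref{th1.1} are Dunkl translates of a radial bump. They are therefore supported in $\{x: d(x,x_Q)\le Cr^{-j}\}$, which is the union of the reflected copies $\sigma(\widetilde Q)$, $\sigma\in G$, of a fixed dilate $\widetilde Q$. Nothing in the paper gives them Euclidean decay: all estimates there are in terms of $d$, which is why $\sum_{\sigma\in G}$ appears in Section 4. Moreover, $\int\psi_Q(x,x_Q)\,d\omega(x)=0$ holds only over the whole union.

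Consequently $a_{\ell,\mathcal Q^*_\ell}$ is not concentrated near $\mathcal Q^*_\ell$. Its pieces on the copies $\sigma(\widetilde{\mathcal Q^*_\ell})$ have integrals $c_\sigma$ that are individually nonzero; only $\sum_\sigma c_\sigma=0$ holds. In Step 3 you then cannot apply (\ref{smooth y3}) around $x_Q$ for $y\in\sigma(\widetilde Q)$, $\sigma\neq\mathrm{id}$, because $\|y-x_Q\|\approx\|\sigma x_Q-x_Q\|$ need not be $\le d(x,x_Q)/2$.

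Expanding around $\sigma x_Q$ on each copy instead leaves the term $\sum_\sigma c_\sigma K(x,\sigma x_Q)$. The size condition (\ref{si}) controls this only by $\omega(B(x,d(x,x_Q)))^{-1}$ in the range $\ell(Q)\lesssim d(x,x_Q)\lesssim\|\sigma x_Q-x_Q\|$. With only the trivial bound $|c_\sigma|\le\|a\|_{L^1}$, and taking $p(\cdot)\equiv p<1$ for simplicity, integrating $|Ta|^p$ over that range gives $\big(\omega(B(x_Q,\|\sigma x_Q-x_Q\|))/\omega(Q)\big)^{1-p}$. This is not uniformly bounded. The only way around it would be to show the $c_\sigma$ are small, which needs fine estimates on Dunkl translates that you do not supply.

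So the real obstacle is not the mixture of metrics you flag. For a function supported in one Euclidean ball $B(x_0,\rho)$ with vanishing integral, (\ref{smooth y3}) applies whenever $d(x,x_0)>2\rho$, and $\|x-x_0\|\ge d(x,x_0)$ removes the Euclidean factor at once.

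The repair is to build the atoms from the Coifman-type formula instead of the Dunkl one. You already pass to $H^{p(\cdot)}_{cw}$ via Theorem \ref{relation}, so use Theorem \ref{dCRh} in the form $f=\sum_k\sum_{Q\in\mathcal Q^k}\omega(Q)D_k(\cdot,x_Q)\widetilde{\widetilde D}_k(f)(x_Q)$. Coifman's $S_k$ have compactly supported kernels, so each $D_k(\cdot,x_Q)$ lives in a Euclidean ball $B(x_Q,Cr^{-k})$ and has integral zero. Your stopping time, run on the square function of the coefficients $\widetilde{\widetilde D}_k(f)(x_Q)$, then yields genuine atoms; that square function is comparable in $L^{p(\cdot)}$ to $S_{cw}(f)$ by the usual Plancherel--P\'olya argument. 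Alternatively, quote the atomic decomposition of variable Hardy spaces on RD-spaces, e.g.\ \cite{ZSY16}. With such atoms your Steps 3--4 go through as written.

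There is one further gap: the $L^2$-atom lemma you invoke needs $p^+<2$. Its duality step requires $M_D$ to be bounded on $L^{s(\cdot)}$ with $s(\cdot)=(p(\cdot)/\underline p)'/(2/\underline p)'$, and $s^->1$ is equivalent to $p^+<2$. The statement, however, allows every $p^+<\infty$.

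For $p^-\le 1$ and $p^+\ge 2$ you need $(p(\cdot),q)$-atoms with $q>p^+$, together with $L^q$-boundedness of $T$. Their $L^q$ size bound must come from an $L^q$ version of the stopping-time estimate, not from the $L^2$ bound in Proposition \ref{holder}. For $p^->1$ the statement is simply Theorem \ref{bdlp}, since then $H^{p(\cdot)}_D\cong L^{p(\cdot)}$.
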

  
  Besides, in the proof of the main result in this section, we also need the definition of the paraproduct operator as follows.

\begin{definition}\label{def:paraproduct}
Suppose that $\{S_k\}$, $\{D_k\}$ and $\{\widetilde{\widetilde{D}}_k\}$ are the same as defined above. The paraproduct operator of $f\in\mathcal{M}(\beta,\gamma,r,x_0)'$ is defined by
\begin{align*}
\Pi_b(f)(x)
&=
\sum_{k=-\infty}^{\infty}
\sum_{Q\in\mathcal{Q}^k}
\omega(Q)
D_k(x,x_Q)
\widetilde{\widetilde{D}}_k(b)(x_Q)
S_k(f)(x_Q),
\end{align*}
where $b\in BMO(\mathbb{R}^n,d\omega)$.
\end{definition}

\begin{remark}\label{rem:paraproduct}
By \cite[Theorem 2.15]{THHLL1}, the paraproduct operator $\pi_b$ is the Calder\'on--Zygmund operator.
By the classical method, we find that $\pi_b$ is bounded on the variable Hardy space $H^{p(\cdot)}_{cw}(\mathbb R^n,d\omega)$
for $p(\cdot)\in LH$ with $\frac{{N}}{{N}+\epsilon}<p^-\le p^+\le 1$.
See for example \cite{Tan2020-para}.
\end{remark}
 
Now we are in a position to state the main result in this section. 
\begin{theorem}\label{t1}
    Suppose that $T$ is a Dunkl--Calder\'on--Zygmund operator with the exponent of the regularity of the kernel $\varepsilon$ and  $p(\cdot)\in\mathcal P^0\cap LH$ with $\frac{{N}}{{N}+\epsilon}<p^-\le p^+<\infty$.      
    Then $T$ is bounded on $H_D^{p(\cdot)}(\mathbb R^n)$ if and only if $T^*(1)=0$. Moreover, $T$ is also bounded on 
    $\mathcal C_D^{p(\cdot)}(\mathbb R^n)$ if and only if $T(1)=0.$
\end{theorem}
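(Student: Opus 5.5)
We prove the Hardy space statement first and then obtain the Carleson statement from it by duality. Throughout, Theorem \ref{relation} lets us replace $\|\cdot\|_{H_D^{p(\cdot)}}$ by $\|\cdot\|_{H^{p(\cdot)}_{cw}(\mathbb R^n,d\omega)}$. This means we may work with the Coifman decomposition $\{D_k\}$, whose kernels satisfy size and smoothness estimates with respect to the Euclidean metric and have cancellation in both variables by Theorem \ref{dCRh}.

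\emph{Sufficiency of $T^*(1)=0$.} Take $f\in L^2(\mathbb R^n,d\omega)\cap H_D^{p(\cdot)}(\mathbb R^n)$ and expand it with the discrete reproducing formula of Theorem \ref{dCRh}:
\[f=\sum_{k'}\sum_{Q'\in\mathcal Q^{k'}}\omega(Q')\widetilde D_{k'}(\cdot,x_{Q'})D_{k'}f(x_{Q'}).\]
Applying $D_kT$ gives
\[D_kTf(x_Q)=\sum_{k',Q'}\omega(Q')\,(D_kT\widetilde D_{k'})(x_Q,x_{Q'})\,D_{k'}f(x_{Q'}).\]
The key step is an almost orthogonality estimate for the kernel $D_kT\widetilde D_{k'}(x,y)$. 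We want a bound of the form
\[C\,r^{-|k-k'|\varepsilon'}\frac{1}{V_{k\wedge k'}(x)+V(x,y)}\Big(\frac{r^{-(k\wedge k')}}{r^{-(k\wedge k')}+\|x-y\|}\Big)^{\gamma}\]
for some $\varepsilon'<\varepsilon$.
\begin{itemize}
\item When $k\ge k'$, we use $T^*(1)=0$, which says $\int K(x,y)\,d\omega(x)=0$ in the weak sense, so that $T^*D_k$ has vanishing integral. Combined with the smoothness of $\widetilde D_{k'}$, this gives the decay $r^{-(k-k')\varepsilon'}$.
\item When $k<k'$, we use the cancellation of $\widetilde D_{k'}$ in the second variable (Theorem \ref{dCRh}(iv)) together with the $y$-regularity \eqref{smooth y3} of $K$.
\end{itemize}
The size condition \eqref{si} is stated in terms of $d(x,y)$ rather than $\|x-y\|$, so the tail estimates must be done by splitting into the regions $d(x,y)\sim\|x-y\|$ and $d(x,y)\ll\|x-y\|$. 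We absorb the extra factor $(d/\|x-y\|)^\varepsilon$ using the comparability $\omega(B(x,r))\sim\omega(B_d(x,r))$ and summing over $G$. This step is where we expect the main obstacle, and it also explains the restriction $p^->N/(N+\varepsilon)$.

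With the almost orthogonality estimate in hand, we use the standard bound
\[\sum_{Q'\in\mathcal Q^{k'}}\omega(Q')\frac{(\cdots)}{\cdots}|D_{k'}f(x_{Q'})|\lesssim r^{(\cdot)}\Big\{M_D\Big(\sum_{Q'}|D_{k'}f(x_{Q'})|^{\theta}\chi_{Q'}\Big)(x)\Big\}^{1/\theta},\]
where $N/(N+\varepsilon')<\theta<p^-$. This yields
\[S_{cw}(Tf)\lesssim\Big\{\sum_k\Big(\sum_{k'}r^{-|k-k'|\delta}\big[M_D(\cdots)\big]^{1/\theta}\Big)^2\Big\}^{1/2}.\]
Young's inequality in $k$ followed by Lemma \ref{FS}, applied in $L^{p(\cdot)/\theta}$ with $\ell^{2/\theta}$ (note $p(\cdot)/\theta\in\mathcal P\cap LH$), gives $\|Tf\|_{H^{p(\cdot)}_{cw}}\lesssim\|f\|_{H^{p(\cdot)}_{cw}}$. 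Density of $L^2\cap H_D^{p(\cdot)}$ then extends $T$ to the whole space.

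\emph{Necessity of $T^*(1)=0$.} Suppose $T$ is bounded on $H_D^{p(\cdot)}$. Take an $L^2$ function $a$ with compact support and $\int a\,d\omega=0$; such an $a$ lies in $H_D^{p(\cdot)}$. Then $Ta\in H_D^{p(\cdot)}$. Since $p^-\le 1$ and $Ta$ has the $L^2$ decay at infinity coming from \eqref{smooth y3} and the cancellation of $a$, this forces $\int Ta\,d\omega=0$. (If $p^->1$, we instead use the analogous argument with the cancellation built into the discrete reproducing formula.) Hence $\langle T^*1,a\rangle=0$ for all such $a$, which is exactly $T^*(1)=0$ as an element of $(C_0^\eta)'$ modulo constants.

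\emph{Carleson space.} We reduce this to the first part by duality. By Theorem \ref{dual-dunkl}, $\mathcal C_D^{p(\cdot)}=(H_D^{p(\cdot)})'$ (for $p^+\le1$), and Proposition \ref{holder} gives the pairing. If $T(1)=0$, then $T^*$ is a Dunkl--Calder\'on--Zygmund operator with $(T^*)^*(1)=0$, so by the first part $T^*$ is bounded on $H_D^{p(\cdot)}$. Then for $g\in L^2\cap\mathcal C_D^{p(\cdot)}$ we have $\langle f,Tg\rangle=\langle T^*f,g\rangle$, and taking the supremum over $f$ with $\|f\|_{H_D^{p(\cdot)}}\le1$ gives $\|Tg\|_{\mathcal C_D^{p(\cdot)}}\lesssim\|g\|_{\mathcal C_D^{p(\cdot)}}$. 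Density completes the argument. Conversely, if $T$ is bounded on $\mathcal C_D^{p(\cdot)}$, then $T^*$ is bounded on the predual $H_D^{p(\cdot)}$, using the weak density of $L^2$ in $\mathcal C_D^{p(\cdot)}$. By the necessity part, $(T^*)^*(1)=T(1)=0$. The paraproduct $\Pi_b$ of Remark \ref{rem:paraproduct} is used to make this rigorous: $T-\Pi_{T1}$ satisfies $(T-\Pi_{T1})(1)=0$, and comparing it with $T$ shows $\Pi_{T1}$ must annihilate $\mathcal C_D^{p(\cdot)}$, which forces $T1=0$ in $BMO$.
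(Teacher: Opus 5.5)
\textbf{There is a genuine gap at the almost orthogonality step of your sufficiency argument.} A bound for $D_kT\widetilde D_{k'}$ with decay $r^{-|k-k'|\varepsilon'}$ in \emph{both} directions cannot be obtained from $T^*(1)=0$ alone, and your two bullets attach the cancellation conditions to the wrong cases.

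Since $\int T^*\big(D_k(x,\cdot)\big)\,d\omega=\langle D_k(x,\cdot),T(1)\rangle=D_k(T1)(x)$, the claim that ``$T^*D_k$ has vanishing integral'' is the condition $T(1)=0$, which is not assumed. What $T^*(1)=0$ actually gives is $\int T\big(\widetilde D_{k'}(\cdot,y)\big)\,d\omega=0$. Paired with the smoothness of $D_k(x,\cdot)$, this produces the gain $r^{-(k'-k)\varepsilon'}$, so it yields decay only when $k<k'$. The tools you cite for that case, the cancellation of $\widetilde D_{k'}$ and \eqref{smooth y3}, only localize $T\widetilde D_{k'}(\cdot,y)$ near $y$; by themselves they give no gain in $k'-k$.

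In the case $k\gg k'$ nothing works. Splitting $\widetilde D_{k'}(\cdot,y)=\widetilde D_{k'}(x,y)+[\widetilde D_{k'}(\cdot,y)-\widetilde D_{k'}(x,y)]$ shows that the leading part of $D_kT\widetilde D_{k'}(x,y)$ is $\widetilde D_{k'}(x,y)\,D_k(T1)(x)$. This term does not decay in $k-k'$ when $T(1)\neq0$, so the sum over $k'\le k$ diverges and the maximal-function/Fefferman--Stein step cannot be run.

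\textbf{The missing idea is the paraproduct reduction, which is exactly how the paper proves sufficiency.} It first treats operators with $T(1)=T^*(1)=0$, where the two-sided estimate \cite[Lemma 2.10]{THHLL2} is available. It then writes $T=(T-\Pi_{T(1)})+\Pi_{T(1)}$. Since $\Pi_b(1)=b$ and $\Pi_b^*(1)=0$, the operator $T-\Pi_{T(1)}$ satisfies both cancellation conditions. Meanwhile $\Pi_{T(1)}$ is bounded on $H^{p(\cdot)}_{cw}$ by Remark \ref{rem:paraproduct}; its boundedness comes from the Carleson property of $\{\widetilde{\widetilde D}_k(T1)\}$, not from almost orthogonality. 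Theorem \ref{relation} then transfers the bound to $H_D^{p(\cdot)}$.

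\textbf{Your use of the paraproduct in the Carleson necessity is not needed and not right.} Boundedness of $T$ and of $T-\Pi_{T1}$ on $\mathcal C_D^{p(\cdot)}$ only gives boundedness of $\Pi_{T1}$, not that it annihilates the space. Also, ``$T^*$ bounded on the predual'' does not follow by duality when $p(\cdot)$ takes values below $1$, since $H_D^{p(\cdot)}$ is then not normed by its dual. The paper argues directly: $\psi_Q(1)=0$ for every $Q$, so $\|T(1)\|_{\mathcal C_D^{p(\cdot)}}\lesssim\|1\|_{\mathcal C_D^{p(\cdot)}}=0$.

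\textbf{Smaller points.}
\begin{enumerate}
\item Your target kernel bound should be phrased with $d(x,y)$, with the resulting maximal functions evaluated at $\sigma(x)$ and summed over $\sigma\in G$, as in the paper. A Euclidean bound of the form you display is not available in general for Dunkl kernels near the reflected points $\sigma(y)$.
\item Drop the parenthetical about $p^->1$. In that range $H_D^{p(\cdot)}\cong L^{p(\cdot)}$, and every Dunkl--Calder\'on--Zygmund operator is bounded there by Theorem \ref{bdlp}, so no argument can force $T^*(1)=0$.
\item For $p^+\le 1$, your mean-zero test-function argument for necessity is sound. It is a more direct substitute for the paper's appeal to \cite[Theorem 1.27]{THHLL2}.
\end{enumerate}
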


\begin{proof}
If $T$ is bounded on $H_D^{p(\cdot)}(\mathbb R^n)$ for $p(\cdot)\in\mathcal P^0\cap LH$ with $\frac{{N}}{{N}+\epsilon}<p^-\le p^+<\infty$,
then it is also bounded on $H_D^{p_0}(\mathbb R^n)$ when $p(\cdot)=p_0\in (\frac{N}{N+\epsilon}, \infty)$.
By \cite[Theorem 1. 27]{THHLL2}, we obtain that, in this case, $T^\ast(1)=0$. 

Now we prove that if $T$ is a Dunkl--Calder\'on--Zygmund operator with $T^*(1)=0,$ then $T$ is bounded on the Dunkl--Hardy space 
$H_D^{p(\cdot)}(\mathbb R^n,d\omega)$ for $p(\cdot)\in\mathcal P^0\cap LH$ with $\frac{{N}}{{N}+\epsilon}<p^-\le p^+<\infty$.
We first assume that $T$ fulfills $T^\ast(1)=T(1)=0.$ 
By the fact that $L^2(\mathbb R^n, d\omega)\cap H_D^{p(\cdot)}(\mathbb R^n)$ is dense in $H_D^{p(\cdot)}(\mathbb R^n),$ we only need to prove 
$$\|T(f)\|_{H_D^{p(\cdot)}}\leqslant C\|f\|_{H_D^{p(\cdot)}}$$
    for $f\in L^2(\R^N,\omega)\cap H_d^p(\R^N,\omega).$ 
By Theorem \ref{th1.1}, we know that, if $f\in L^2(\mathbb R^n,d\omega)\cap H_D^{p(\cdot)}(\mathbb R^n),$ then 
$$f(x)=\sum\limits_{k'=-\infty}^\infty\sum\limits_{Q'\in {\mathcal Q'}^{k'}}w(Q')q_{Q'}h(x_{Q'})\psi_{Q'}(x,x_{Q'}),$$ 
where the series converges in both $L^2(\mathbb R^n, d\omega)$ and $H_D^{p(\cdot)}(\mathbb R^n).$
    We have
    \begin{align*}
        &\|T(f)\|_{H_D^{p(\cdot)}}=\|S\big(T(f)\big)\|_{L^{p(\cdot)}}\\
        &=\left\|\left\{\sum\limits_{k=-\infty}^\infty\sum\limits_{Q\in {\mathcal Q}^k}\left|q_Q\Big(\sum\limits_{k'=-\infty}^\infty\sum\limits_{Q'\in {\mathcal Q}^{k'}}\omega(Q')q_{Q'}h(x_{Q'})T(\psi_{Q'}(\cdot,x_{Q'})\Big)(x_Q)\right|^2
        \chi_{Q}\right\}^{\frac{1}{2}}\right\|_{L^{p(\dot)}}\\
        &=\left\|\left\{\sum\limits_{k=-\infty}^\infty\sum\limits_{Q\in {\mathcal Q}^k}\left|\sum\limits_{k'=-\infty}^\infty\sum\limits_{
        Q'\in {\mathcal Q}^{k'}}\omega(Q')q_{Q'}h(x_{Q'})q_QT\psi_{Q'}(x_Q,x_{Q'})\right|^2\chi_{Q}\right\}^{\frac{1}{2}}\right\|_{L^{p(\cdot)}}.
    \end{align*}


By using Lemma \cite[Lemma 2.10]{THHLL2}, we deduce that
     \begin{align*}
    S_{k',k}(x_Q,x_Q')|&:=|q_QT\psi_{Q'}(x_Q,x_Q')|\\
    &\lesssim r^{-|k-k'|\varepsilon'}
    \frac1{V(x,y, r^{-{k'}\vee -k}+d(x,y))}\Big(\frac{r^{-{k'}\vee -k}}{r^{-{k'}\vee -k}+d(x,y)}\Big)^\gamma,
     \end{align*}
    where $\gamma, \epsilon'\in (0, \epsilon)$.
    Then for $p(\cdot)\in\mathcal P^0\cap LH$ with $\frac{{ N}}{{N}+\epsilon}<p^-\le p^+<\infty$,
        \begin{align*}
        &\|T(f)\|_{H_D^{p(\cdot)}}\\
        &\lesssim
        \left\|\bigg(\sum\limits_{k'=-\infty}^\infty\sum\limits_{Q'\in {\mathcal Q}^{k'}}
        \Big|\sum\limits_{k=-\infty}^\infty\sum\limits_{Q\in {\mathcal Q}^k} \omega(Q)S_{k',k}(x_{Q'},x_Q)q_Qh(x_Q)\Big|^2\chi_{Q'}\bigg)^{\frac{1}{2}}\right\|_{L^{p(\cdot)}(\mathbb R^n,dw)} \\
        &\lesssim
        \left\| \bigg(\sum\limits_{k=-\infty}^\infty\sum\limits_{Q\in {\mathcal Q}^k} |q_Qh(x_Q)|^2\chi_{Q}\bigg)^{\frac{1}{2}}\right\|_{L^{p(\cdot)}(\mathbb R^n,dw)}\lesssim \|h\|_{H_D^{p(\cdot)}}\sim \|f\|_{H_D^{p(\cdot)}}.
        \end{align*}
        
        Indeed, by the  \begin{align*}
        &\sum\limits_{k'=-\infty}^\infty\sum\limits_{Q'\in {\mathcal Q}^{k'}}
        \Big|\sum\limits_{k=-\infty}^\infty\sum\limits_{Q\in {\mathcal Q}^k} \omega(Q)S_{k',k}(x_{Q'},x_Q)q_Qh(x_Q)\Big|^2\chi_{Q'} \\
        &\lesssim \sum\limits_{\sigma\in G}\sum\limits_{k=-\infty}^\infty
        \bigg\{M\Big(\sum\limits_{Q\in {\mathcal Q}^k}|q_Qh(x_Q)|^\theta\chi_{Q}\Big)(\sigma(x))\bigg\}^{2/\theta},
        \end{align*}
        where $\sup\limits _{k\in \Bbb Z}\sum\limits_{k'=-\infty}^\infty r^{-|k-k'|\varepsilon}r^{[-k-(-k'\vee -k)]{ N}(1-\frac 1\theta)}<\infty$ for $\frac{N}{{ N}+\varepsilon}<\theta<p_-\leqslant 1.$
Finally, by Lemma \ref{FS}, we conclude that
        \begin{align*}
        &\left\| \bigg(\sum\limits_{k'=-\infty}^\infty\sum\limits_{Q'\in {\mathcal Q}^{k'}}
        \Big|\sum\limits_{k=-\infty}^\infty\sum\limits_{Q\in {\mathcal Q}^k} \omega(Q)S_{k,k'}(x_{Q'},x_Q)q_Qh(x_Q)\Big|^2\chi_{Q'}\bigg)^{\frac{1}{2}}\right\|_{L^{p(\cdot)}(\mathbb R^n,dw)}  \\
        \lesssim& \sum\limits_{\sigma\in G}
        \left\|\Big(\sum\limits_{k=-\infty}^\infty\sum\limits_{Q\in {\mathcal Q}^k}|q_Qh(x_Q)|^2\chi_Q(\sigma(x))\Big)^{\frac{1}{2}}\right\|_{L^{p(\cdot)}(\mathbb R^n,dw)}\\
        \lesssim&
        \left\|\Big(\sum\limits_{k=-\infty}^\infty\sum\limits_{Q\in {\mathcal Q}^k}|q_Qh(x_Q)|^2\chi_Q(x)\Big)^{\frac{1}{2}}\right\|_{L^{p(\cdot)}(\mathbb R^n,dw)}.
        \end{align*}

Denote $ {\widetilde T}= T- \Pi_{T(1)}.$ We find that ${\widetilde T}(1)={(\widetilde T)^*}(1)=0.$ 
Hence, ${\widetilde T}$ is bounded on $H_D^{p(\cdot)}(\mathbb R^n)$ for $p(\cdot)\in LH$ with $\frac{{N}}{{N}+\epsilon}<p^-\le p^+\le 1$
Form Remark \ref{rem:paraproduct}, $\Pi_{T(1)}$ is bounded on the variable Hardy space $H_{cw}^{p(\cdot)}(\mathbb R^n)$.
Therefore, we conclude that $T$ is bounded on $H_D^{p(\cdot)}(\mathbb R^n)$ for $p(\cdot)\in LH$ with $\frac{{N}}{{N}+\epsilon}<p^-\le p^+\le 1$.


On the other hand, if $T$ is also bounded on 
    $\mathcal C_D^{p(\cdot)}(\mathbb R^n)$ for $p(\cdot)\in\mathcal P^0\cap LH$ with $\frac{{N}}{{N}+\epsilon}<p^-\le p^+<\infty$,
    then $$\|T(1)\|_{\mathcal C_D^{p(\cdot)}}\lesssim \|1\|_{\mathcal C_D^{p(\cdot)}}.$$
By the fact that $\|1\|_{\mathcal C_D^{p(\cdot)}}=0$, in this case we obtain that $T(1)=0.$
In order to complete the proof of this theorem, we only need to
show the boundedness of $T$ on $\mathcal C_D^{p(\cdot)}(\mathbb R^n)$. 
By using Theorem \ref{dual-dunkl},
we obtain that
$$
|\left<Tf,g\right>|=|\left<f,T^\ast g\right>|\le
\|f\|_{\mathcal C_D^{p(\cdot)}}\|T^\ast g\|_{H_D^{p(\cdot)}}
\le C\|f\|_{\mathcal C_D^{p(\cdot)}}\|g\|_{H_D^{p(\cdot)}}.
$$
Thus, for each $f\in \mathcal C_D^{p(\cdot)}(\mathbb R^n)\cap L^2(\mathbb R^n, d\omega)$,
$L_f(g)=\left<Tf,g\right>$ is a continuous linear functional
on $H_D^{p(\cdot)}(\mathbb R^n)\cap L^2(\mathbb R^n, d\omega)$.
Since $H_D^{p(\cdot)}(\mathbb R^n)\cap L^2(\mathbb R^n, d\omega)$ is dense in $H_D^{p(\cdot)}(\mathbb R^n)$,
$L_f$
can be extended to a continuous linear functional
on $H_D^{p(\cdot)}(\mathbb R^n)$ with
$$\|L_f\|\le C\|f\|_{\mathcal C_D^{p(\cdot)}}.$$
Applying Theorem \ref{dual-dunkl} again, there exists
$h\in \mathcal C_D^{p(\cdot)}(\mathbb R^n)$ such that $\left<Tf,g\right>
=\left<h,g\right>$ for $g\in H_D^{p(\cdot)}(\mathbb R^n)\cap L^2(\mathbb R^n,d\omega)$ with
$$\|h\|_{\mathcal C_D^{p(\cdot)}}\le C\|L_f\|.$$

Thus,
\begin{align*}
\|Tf\|_{CMO^{p(\cdot)}}
&= \sup\limits _P \left\{\frac{\omega(P)}{\|\chi_P\|^2_{L^{p(\cdot)}}}\sum\limits_{Q\subseteq P} \omega(Q) |\psi_Q(T(f)(\cdot,y))(x_Q)|^2
\right\}^{\frac{1}{2}}\\
        &=\sup\limits _P  \left\{\frac{\omega(P)}{\|\chi_P\|^2_{L^{p(\cdot)}}}\sum\limits_{Q\subseteq P} \omega(Q) |\psi_Q(h)(\cdot,y))(x_Q)|^2
        \right\}^{\frac{1}{2}}\\
&=\|h\|_{\mathcal C_D^{p(\cdot)}}\lesssim \|L_f\|\lesssim \|f\|_{\mathcal C_D^{p(\cdot)}}.
\end{align*}

Now we extend this result to the whole space $\mathcal C_D^{p(\cdot)}(\mathbb R^n)$.
For any $f\in \mathcal C_D^{p(\cdot)}(\mathbb R^n)$, then there is a sequence 
$\lbrace f_n\rbrace\subseteq L^2(\mathbb R^n,d\omega)\cap \mathcal C_D^{p(\cdot)}(\mathbb R^n)$ 
such that $ \Vert f_n\Vert_{ \mathcal C_D^{p(\cdot)}}\lesssim \Vert f\Vert_{ \mathcal C_D^{p(\cdot)}},$
     and  for each $g\in L^2(\mathbb R^n,d\omega)\cap H_D^{p(\cdot)}(\mathbb R^n),$ 
     $\langle f_n, g\rangle\rightarrow \langle f, g\rangle$ as $n\rightarrow \infty.$ 
     Therefore, for  $f\in  \mathcal C_D^{p(\cdot)}(\mathbb R^n)$, by the duality we define
    $$ \langle T(f), g\rangle:=\lim \limits_{n \rightarrow \infty} \langle T(f_n), g\rangle$$
    for each $g\in L^2(\R^N,\omega)\cap H_d^p(\R^N,\omega).$
    
In fact, $\langle T(f_j-f_k), g\rangle=\langle f_j-f_k, T^*(g)\rangle$,
where $f_j-f_k$ and $g $ belong to $L^2$ and $T$ is bounded on $L^2$. 
Hence, $T^*$ is bounded on $L^2$ and the kernel of $T^*$ satisfies the conditions  $T(1)=(T^*)^*(1)=0.$
Then by the boundedness on $H_D^{p(\cdot)}$ above, $T^*(g) \in L^2(\mathbb R^n,d\omega)\cap  H_D^{p(\cdot)}(\mathbb R^n).$ 
By Proposition  \ref{holder}, $\langle f_j-f_k, T^*(g)\rangle$ tends to zero as $j,k\rightarrow \infty.$
Therefore, $Tf$ is well defined.

    To finish the proof of ``if'' part, we claim that for each $f\in L^2(\R^N,\omega)\cap  CMO_d^p(\R^N,\omega)$,
    \begin{eqnarray}\label{T bd on L2 cap CMO}
    \|T(f)\|_{CMO_d^p(\R^N,\omega)}\leqslant C\|f\|_{CMO_d^p(\R^N,\omega)},
    \end{eqnarray}
    where the constant $C$ is independent of $f$.

Choose $g=\psi_Q$. Then using Fatou’s Lemma, we deduce that
    \begin{align*}
        \|T(f)\|_{{\mathcal C_D^{p(\cdot)}}}&=\| \lim_{n\rightarrow\infty}T(f_n)\|_{{\mathcal C_D^{p(\cdot)}}}
        \leqslant  \liminf_{n\rightarrow\infty} \|T(f_n)\|_{{\mathcal C_D^{p(\cdot)}}}\\
        &\lesssim  \|f_n\|_{{\mathcal C_D^{p(\cdot)}}}\lesssim  \|f\|_{{\mathcal C_D^{p(\cdot)}}},
    \end{align*}
which completes this proof.

 \end{proof}

{\bf Acknowledgments.}




The project is sponsored by
the National Natural Science Foundation of China(Grant No. 11901309) and 
the Open Project
Program of Key Laboratory of Mathematics and Complex System (Grant No. K202502), Beijing Normal University.
Part of this work was completed during the author's visit to Auburn University. The author would like to thank Auburn University for its hospitality.

\bigskip
\bigskip

\medskip
\noindent Jian Tan\\
\noindent School of Science,\\
Nanjing University of Posts and Telecommunications,\\
Nanjing 210023, People's Republic of China\\

\noindent {\it E-mail address}: \texttt{tj@njupt.edu.cn}(J. Tan)\\

\end{document}